\documentclass[a4paper,12pt]{article}
\setlength{\oddsidemargin}{2mm}
\setlength{\evensidemargin}{2mm}
\setlength{\topmargin}{-15mm}
\setlength{\textwidth}{156mm}
\setlength{\textheight}{250mm}

\usepackage{amsmath,amssymb}
\usepackage{amsthm}
\usepackage[dvipdfmx]{graphicx}
\usepackage{mathrsfs}
\usepackage[active]{srcltx}
\usepackage{amscd}
\usepackage{cases}
 \newtheorem{theorem}{Theorem}[section]
 \newtheorem{proposition}[theorem]{Proposition}
 \newtheorem{fact}[theorem]{Fact}
 \newtheorem{lemma}[theorem]{Lemma}
 \newtheorem{corollary}[theorem]{Corollary}
\theoremstyle{definition}
 \newtheorem{definition}[theorem]{Definition}

\numberwithin{equation}{section}

\newcommand{\R}{\boldsymbol{R}}

\newcommand{\pmt}[1]{{\begin{pmatrix} #1  \end{pmatrix}}}

\newcommand{\spann}[1]{\left\langle{#1}\right\rangle}
\newcommand{\rank}{\operatorname{rank}}
\renewcommand{\phi}{\varphi}
\renewcommand{\hom}{\operatorname{Hom}}
\renewcommand{\Gamma}{\varGamma}

\begin{document}
\begin{center}
{\large {\bf 
A note on singular points of
bundle homomorphisms from 
a tangent distribution
into a vector bundle of the
same rank
}}\\[5mm]
Kentaro Saji and Asahi Tsuchida\\[5mm]
\today
\end{center}
\begin{abstract}
We consider bundle homomorphisms between
tangent distributions and vector bundles
of the same rank.
We 
study the conditions for fundamental singularities when
the bundle homomorphism is induced from a Morin map.
When the tangent distribution is the
contact structure,
we characterize singularities of the bundle homomorphism
by using the 
Hamilton vector fields.
\end{abstract}

\section{Introduction}
In \cite{suy1,coh}, the notion of coherent tangent bundle
is introduced.
It is a bundle homomorphism between the tangent bundle
and a vector bundle with the same rank with
a kind of metric.
This is a generalization of fronts and $C^\infty$-maps 
between the same dimensional manifolds.
Singular points of bundle homomorphisms $\phi:TM\to E$
are points where $\phi(p):T_pM\to E_p$
is not a bijection.
In \cite{suy1,coh}, differential geometric invariants
of singularities of bundle homomorphisms are defined and
investigated. 
On the other hand,
in \cite{index}, topological properties of singular sets
of 
bundle homomorphisms without metric 
are studied.
See \cite{a} for another kind of application of 
coherent tangent bundle.
In this paper, we consider rank $r(<m)$
tangent distributions instead of the tangent bundles
of $m$-dimensional manifolds.
Since $r<m$, 
the singularities appearing on the
bundle homomorphisms are slightly different
from the case $\phi:TM\to E$, where $\dim M=\rank E=m$,
and the case $\phi:TM\to E$, where $\dim M=\rank E=r$
either.

Let $D_1$ be a rank $r$ tangent distribution on
an $m$-dimensional manifold $M$.
Let $N$ be an $r$ dimensional manifold, and
$f:M\to N$ a map.
Then a bundle homomorphism $\phi=df:D_1\to f^*TN$ is induced
from $f$.
Singularities of $\phi$ should be related to
$D_1$ and $f$.
In this paper, we stick to our interest into the low
dimensional case, 
we study the relationships when $f$ is a Morin map,
and $D_1$ is the foliation or the contact structure
when $m=3$, $r=2$.

\section{Bundle homomorphisms and their singular point}
\subsection{Singular points of bundle homomorphisms}
With the terminology of \cite{coh},
we give definition of 
singular points of bundle homomorphisms.
Let $M$ be an $m$-dimensional manifold, and let
$D_1$ be a rank $r$ ($r<m)$ tangent distribution of $M$
namely, a subbundle of $TM$.
Let $D_2$ be a rank $r$ vector bundle over $M$,
and let $\phi:D_1\to D_2$ be a bundle homomorphism.
If the rank of the linear map $\phi_p:(D_1)_p\to (D_2)_p$
is less than $r$, then $p\in M$ is called {\it singular point\/}
of $\phi$.
We denote by $S$ the set of singular points of $\phi$.
If the rank of $\phi_p$
is $r-1$, then $p$ is called a {\it corank one singular point}.
\begin{lemma}
If\/ $p$ is a corank one singular point of\/ $\phi$.
Then there exists a neighborhood\/ $U$ of\/ $p$ and 
a section\/ $\eta_\phi\in\Gamma(D_1)$
such that if\/ $q\in S\cap U$ then\/
$(\eta_\phi)_q$ is a generator of the kernel of\/ $\phi_q$.
\end{lemma}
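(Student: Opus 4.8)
The plan is to work in local trivializations, represent $\phi$ by a matrix-valued function, and extract the kernel vector by means of the adjugate (classical adjoint) matrix. First I would choose a neighborhood $U_0$ of $p$ on which both bundles trivialize, fixing local frames $e_1,\dots,e_r\in\Gamma(D_1|_{U_0})$ and $s_1,\dots,s_r\in\Gamma(D_2|_{U_0})$. Writing $\phi(e_j)=\sum_i a_{ij}s_i$ yields a smooth $r\times r$ matrix-valued function $A=(a_{ij})$ whose rank at each point equals $\rank\phi$ there. The hypothesis that $p$ is a corank one singular point means $\rank A(p)=r-1$, so some $(r-1)\times(r-1)$ minor of $A(p)$ is nonzero.

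The key observation is that this nonvanishing propagates and forces a uniform corank. Fix indices $(i_0,j_0)$ so that the minor obtained by deleting row $i_0$ and column $j_0$ is nonzero at $p$; by continuity it stays nonzero on a smaller neighborhood $U\subset U_0$. Consequently $\rank A(q)\ge r-1$ for every $q\in U$, and therefore each singular point in $U$ has rank exactly $r-1$. In other words every point of $S\cap U$ is automatically corank one, with a one-dimensional kernel, so it suffices to produce a single smooth nowhere-vanishing section of $D_1$ lying in that kernel along the singular locus.

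For this I would use the adjugate. Let $\operatorname{adj}(A)$ denote the classical adjoint, so that $A\,\operatorname{adj}(A)=(\det A)\,I$, and let $\vect{c}(q)$ be the $i_0$-th column of $\operatorname{adj}(A(q))$. Its $j_0$-th entry equals, up to sign, the chosen nonvanishing minor, so $\vect{c}(q)\neq\vect{0}$ on all of $U$; setting $\eta_\phi=\sum_j c_j e_j$ then defines a nowhere-zero smooth section of $D_1|_U$. At a singular point $q\in S\cap U$ we have $\det A(q)=0$, so $A(q)\vect{c}(q)$ is the $i_0$-th column of $(\det A(q))\,I=0$, whence $(\eta_\phi)_q\in\ker\phi_q$; since that kernel is one-dimensional, $(\eta_\phi)_q$ generates it.

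The main obstacle is to guarantee simultaneously that the extracted vector is smooth, nowhere zero on $U$, and annihilated by $A$ along $S\cap U$. The adjugate packages all three at once: smoothness because its entries are polynomials in the $a_{ij}$, nonvanishing because of the fixed minor, and the kernel property from the identity $A\,\operatorname{adj}(A)=(\det A)\,I$. The only care required is to select the column index $i_0$ matching the particular minor that is nonzero at $p$, rather than an arbitrary column, since other columns of $\operatorname{adj}(A)$ may vanish at some singular points and would not yield a generator there.
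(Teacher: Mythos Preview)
Your proof is correct and takes a genuinely different route from the paper's. After passing to a matrix representation $M_\phi$, the paper argues that since $\rank M_\phi(p)=r-1$ exactly one eigenvalue of $M_\phi(p)$ vanishes, then tracks the minimum-modulus eigenvalue as a smooth real function near $p$ and uses the corresponding eigenvector as $\eta_\phi$. Your adjugate construction is more elementary and in fact more robust: it sidesteps eigenvalues entirely---which for a general square matrix need not be real, and the \emph{algebraic} multiplicity of $0$ need not equal $1$ merely from the rank hypothesis (a nilpotent Jordan block already shows this)---and instead extracts the kernel generator directly from the cofactor identity $A\,\operatorname{adj}(A)=(\det A)I$. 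Both approaches produce a section that is smooth and nowhere vanishing on a full neighborhood of $p$, not merely along $S$, which is exactly what the subsequent lemmas require; your explicit choice of the column indexed by $i_0$ makes the nonvanishing transparent, whereas the eigenvector approach needs an extra word to justify smooth dependence.
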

\begin{proof}
By taking frames of $D_1,D_2$, we consider $\phi$ 
as a matrix $M_\phi$ near $p$.
Since $\rank M_\phi(p)=r-1$, only one eigenvalue of 
$M_\phi(p)$ is zero
and the others are not zero.
Thus the eigenvalue having minimum absolute value
among the eigenvalues of $M_\phi$ is uniquely determined,
and is a real valued $C^\infty$ function near $p$.
Hence corresponding eigenvector $\eta_\phi$ is also well-defined.
We have the desired section identifying $\eta_\phi$ as a section.
\end{proof}
We call $\eta_\phi$ the {\it null section\/} of $\phi$.
We set
$$
\lambda_\phi=\det M_\phi.
$$
We call $p\in S$ is {\it non-degenerate\/}
if $d\lambda_\phi(p)\ne0$.
The notions of the null section
and the non-degeneracy is introduced in \cite{krsuy}.
\begin{lemma}
Non-degenerate singular points are of
corank one.
\end{lemma}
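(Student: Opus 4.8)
The plan is to realize $S$ locally as the zero set of $\lambda_\phi$ and then to exploit the cofactor structure of the derivative of a determinant. First I would observe that, in the local frames of $D_1,D_2$ used to define $M_\phi$, a point $q$ near $p$ is singular precisely when $\lambda_\phi(q)=\det M_\phi(q)=0$, so that $S$ coincides with $\lambda_\phi^{-1}(0)$ near $p$. The corank of $\phi$ at $q$ is $r-\rank M_\phi(q)$, so ``corank one'' means $\rank M_\phi(p)=r-1$. Since at any singular point the rank is automatically at most $r-1$, it suffices to rule out $\rank M_\phi(p)\le r-2$, that is, corank at least two.

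The key step is Jacobi's formula for the differential of a determinant, $d\lambda_\phi=\operatorname{tr}\!\big(\widetilde{M_\phi}\,dM_\phi\big)$, where $\widetilde{M_\phi}$ is the adjugate (transposed cofactor) matrix; equivalently $d\lambda_\phi=\sum_{i,j}\Delta_{ij}\,d(M_\phi)_{ij}$, where each cofactor $\Delta_{ij}$ equals $\pm$ an $(r-1)\times(r-1)$ minor of $M_\phi$. I would then note that if $\rank M_\phi(p)\le r-2$, every $(r-1)\times(r-1)$ minor of $M_\phi$ vanishes at $p$, hence all the cofactors satisfy $\Delta_{ij}(p)=0$, and therefore $d\lambda_\phi(p)=0$.

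This contradicts the non-degeneracy hypothesis $d\lambda_\phi(p)\ne 0$, so the corank at $p$ cannot be two or more; combined with the reduction above, the corank at a non-degenerate singular point must be exactly one. As a final check I would confirm that the statement is independent of the chosen frames: a change of frames replaces $M_\phi$ by $AM_\phi B$ for invertible matrix-valued functions $A,B$, which multiplies $\lambda_\phi$ by a nowhere-vanishing function and hence preserves both $\rank M_\phi(p)$ and the non-vanishing of $d\lambda_\phi$ at a zero of $\lambda_\phi$. The only nontrivial point—and thus the main obstacle—is the cofactor computation: recognizing that the coefficients appearing in $d\lambda_\phi(p)$ are precisely the $(r-1)$-minors of $M_\phi$, all of which must vanish once the corank is at least two; everything else is routine bookkeeping.
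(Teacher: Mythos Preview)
Your argument is correct and is essentially the same as the paper's: both show that if $\rank M_\phi(p)\le r-2$ then every $(r-1)\times(r-1)$ minor of $M_\phi(p)$ vanishes, whence $d\lambda_\phi(p)=0$, contradicting non-degeneracy. The paper phrases this via linear dependence of any $r-1$ rows of $M_\phi(p)$, while you invoke Jacobi's formula and cofactors; these are the same computation, and your added remark on frame independence is a harmless bonus (the paper defers that to a later lemma).
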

\begin{proof}
Let $p$ be a non-degenerate singular point.
We assume that $\rank M_\phi(p)<r-1$.
Then any $r-1$ rows of $M_\phi(p)$ are linearly dependent.
Thus
$$
(M_\phi(p))_{u_i}(p)
=
0$$
holds for any $1\leq i\leq m$,
where $(u_1,\ldots,u_m)$ is a coordinate system near $p$,
and $(~)_{u_i}=\partial/\partial u_i$.
This is a contradiction.
\end{proof}
Since $S=\{\lambda_\phi(p)=0\}$,
$S$ is a codimension one submanifold
near a non-degenerate singular point.
With the terminology of 
\cite{suy3}, 
we give the following definition:
\begin{definition}
We call a singular point 
$p\in S$ is a {\it fold-like singular point\/} if
it is corank one, and
$\eta_\phi \lambda_\phi(p)\ne0$.
We call $p\in S$ is a {\it cusp-like singular point\/} 
if
$p$ is non-degenerate and
$\eta_\phi \lambda_\phi(p)=0$ and $\eta_\phi^2 \lambda_\phi(p)\ne0$.
We call $p\in S$ is a {\it swallowtail-like singular point\/}
if
$p$ is non-degenerate, and
$\eta_\phi \lambda_\phi(p)=\eta_\phi^2 \lambda_\phi(p)=0$
and 
$
\rank 
d(\lambda_\phi, \eta_\phi \lambda_\phi, 
\eta_\phi^2 \lambda_\phi)
=3$ at $p$.
\end{definition}
Here, $\xi f$ stands for the directional derivative 
of a function $f$ by the vector field $\xi$,
and $\xi^i f$ stands for the $i$ times directional
derivative by $\xi$.
\begin{lemma}
The definitions of 
fold-like, cusp-like and swallowtail-like singular points
do not depend on the choice of the frames
of\/ $D_1$, $D_2$,
nor on the choice of the null section.
\end{lemma}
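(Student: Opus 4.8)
The plan is to describe the two sources of ambiguity and track their effect on $\lambda_\phi$ and on the iterated derivatives $\eta_\phi^{\,k}\lambda_\phi$. Since a directional derivative $\eta_\phi h$ is intrinsic once the section $\eta_\phi$ and the function $h$ are fixed, the frame dependence enters only through $\lambda_\phi$. First I would fix frames and note that changing the frame of $D_1$ (resp. $D_2$) replaces $M_\phi$ by $B^{-1}M_\phi A$ for invertible matrix-valued functions $A,B$, so that $\tilde\lambda_\phi=\alpha\lambda_\phi$ with $\alpha=\det A/\det B$ nowhere zero. A change of null section replaces $\eta_\phi$ by a section $\tilde\eta_\phi$ that generates $\ker\phi$ along $S$; since $S=\{\lambda_\phi=0\}$ and, at every point in question, $d\lambda_\phi(p)\neq0$ (this is automatic for fold-like points, as $\eta_\phi\lambda_\phi(p)=d\lambda_\phi(p)((\eta_\phi)_p)$, and is part of the definition for the other two), $\lambda_\phi$ is a local submersion and Hadamard's lemma gives $\tilde\eta_\phi=\beta\eta_\phi+\lambda_\phi\zeta$ for a nowhere-vanishing function $\beta$ and some $\zeta\in\Gamma(D_1)$. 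Because $\alpha,\beta$ never vanish, $S$ is unchanged, and evaluating $d\tilde\lambda_\phi=\alpha\,d\lambda_\phi+\lambda_\phi\,d\alpha$ on $S$ shows at once that corank one and non-degeneracy are frame-independent.

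The technical core is the following structural identity, which I would prove by induction on $k$ using the Leibniz rule in the form $\tilde\eta_\phi h=\beta\,\eta_\phi h+\lambda_\phi\,\zeta h$: for each $k\ge0$ there are smooth functions $a_0,\dots,a_{k-1}$ with
\[
\tilde\eta_\phi^{\,k}\tilde\lambda_\phi
=\alpha\beta^{k}\,\eta_\phi^{\,k}\lambda_\phi
+\sum_{j=0}^{k-1}a_j\,\eta_\phi^{\,j}\lambda_\phi .
\]
In the inductive step, applying $\tilde\eta_\phi$ to the leading term produces $\alpha\beta^{k+1}\eta_\phi^{\,k+1}\lambda_\phi$ together with $\lambda_\phi\,\zeta(\eta_\phi^{\,k}\lambda_\phi)$ and $\bigl(\tilde\eta_\phi(\alpha\beta^{k})\bigr)\eta_\phi^{\,k}\lambda_\phi$, both of which lie in the $C^\infty$-module generated by $\lambda_\phi,\dots,\eta_\phi^{\,k}\lambda_\phi$; applying $\tilde\eta_\phi$ to each lower-order term $a_j\eta_\phi^{\,j}\lambda_\phi$ only raises the order by one and likewise stays inside this module. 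Thus the remainder always consists of $C^\infty$-combinations of the strictly lower derivatives $\eta_\phi^{\,j}\lambda_\phi$ with $j<k$.

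With this identity the three definitions follow quickly. Evaluating it at a point $p$ where $\lambda_\phi=\dots=\eta_\phi^{\,k-1}\lambda_\phi=0$ kills the whole remainder and gives $\tilde\eta_\phi^{\,k}\tilde\lambda_\phi(p)=\alpha(p)\beta(p)^{k}\,\eta_\phi^{\,k}\lambda_\phi(p)$; applied with $k=1$ (at $p\in S$) and $k=2$ (at $p\in S$ with $\eta_\phi\lambda_\phi(p)=0$), this shows that the fold-like condition $\eta_\phi\lambda_\phi(p)\ne0$ and the cusp-like condition $\eta_\phi^2\lambda_\phi(p)\ne0$ are each only multiplied by a nonzero factor, hence preserved. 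For the swallowtail-like case I would differentiate the identity and evaluate at a point with $\lambda_\phi=\eta_\phi\lambda_\phi=\eta_\phi^2\lambda_\phi=0$: every term in which a coefficient is differentiated retains a surviving factor $\eta_\phi^{\,j}\lambda_\phi(p)=0$ and drops out, leaving
\[
d\bigl(\tilde\eta_\phi^{\,k}\tilde\lambda_\phi\bigr)(p)
=\alpha\beta^{k}(p)\,d\bigl(\eta_\phi^{\,k}\lambda_\phi\bigr)(p)
+\sum_{j=0}^{k-1}a_j(p)\,d\bigl(\eta_\phi^{\,j}\lambda_\phi\bigr)(p)
\]
for $k=0,1,2$. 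Hence the triple $d(\tilde\lambda_\phi,\tilde\eta_\phi\tilde\lambda_\phi,\tilde\eta_\phi^2\tilde\lambda_\phi)(p)$ is obtained from $d(\lambda_\phi,\eta_\phi\lambda_\phi,\eta_\phi^2\lambda_\phi)(p)$ by a lower-triangular matrix with diagonal entries $\alpha,\alpha\beta,\alpha\beta^2$, which is invertible, so the rank-three condition is preserved.

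I expect the main obstacle to be the bookkeeping in the induction, specifically verifying that both the extension ambiguity $\lambda_\phi\zeta$ in the null section and the derivatives of the coefficients $\alpha,\beta,a_j$ contribute only to the module generated by the lower-order derivatives $\eta_\phi^{\,j}\lambda_\phi$ with $j<k$; once this module structure is established, the vanishing of the lower-order quantities at $p$ makes the fold, cusp, and swallowtail computations collapse to the clean multiplicative and lower-triangular forms above. A minor point to handle with care is the justification of the decomposition $\tilde\eta_\phi=\beta\eta_\phi+\lambda_\phi\zeta$, which requires $\lambda_\phi$ to be a defining submersion for $S$ near the point under consideration; as noted, this holds in each of the three cases because $d\lambda_\phi(p)\neq0$ there.
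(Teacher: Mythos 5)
Your proposal is correct, and it rests on the same decomposition as the paper: a frame change multiplies $\lambda_\phi$ by a nonvanishing function, and a change of null section has the form $\tilde\eta_\phi=\beta\eta_\phi+(\text{something vanishing on }S)$, after which one tracks the iterated derivatives. The differences are organizational but genuine. The paper expands $\tilde\eta^2\lambda_\phi$ and $\tilde\eta^3\lambda_\phi$ by hand and argues term by term that the extra contributions vanish at $p$ (using, as you do, that a function vanishing on $S$ is divisible by $\lambda_\phi$ when $d\lambda_\phi(p)\neq0$); your inductive identity $\tilde\eta_\phi^{\,k}\tilde\lambda_\phi=\alpha\beta^k\eta_\phi^{\,k}\lambda_\phi+\sum_{j<k}a_j\eta_\phi^{\,j}\lambda_\phi$ packages all of that bookkeeping at once and scales to any order. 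For the swallowtail rank condition the routes diverge more: the paper passes to an adapted frame $\{e_1,\dots,e_{r-1},\eta_\phi\}$, splits the rank-three condition into $\eta_\phi^3\lambda_\phi(p)\neq0$ plus a rank condition on a $2\times(r-1)$ submatrix, and checks each piece; you instead differentiate the structural identity at $p$ and observe that $d(\tilde\lambda_\phi,\tilde\eta_\phi\tilde\lambda_\phi,\tilde\eta_\phi^2\tilde\lambda_\phi)(p)$ is obtained from $d(\lambda_\phi,\eta_\phi\lambda_\phi,\eta_\phi^2\lambda_\phi)(p)$ by an invertible lower-triangular matrix, which is cleaner and avoids the adapted-frame reduction entirely. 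One small point worth keeping explicit (as you do) is the fold-like case in the degenerate direction: when $\eta_\phi\lambda_\phi(p)=0$ one cannot invoke the Hadamard decomposition, but there the conclusion follows simply because $\eta_\phi\lambda_\phi(p)$ depends only on the value $(\eta_\phi)_p$, which is determined up to a nonzero scalar by $\ker\phi_p$ --- the same observation the paper makes.
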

\begin{proof}
We change the frames of $D_1$ by a matrix $C_1$, 
and change the frames of $D_2$ by a matrix $C_2$.
Then $M_\phi$ is changed to $C_2^{-1}M_\phi C_1$.
Thus the
independence of the choice of frames are clear.
We show the independence of the choice of the
null section,
and the case of fold-like singular points are
also clear, since $\eta_\phi \lambda_\phi(p)$ is a directional derivative
of $(\eta_\phi)_p$.
Furthermore, the independence of the non-degeneracy is also 
clear.
We set
$\tilde\eta=a\eta_\phi+b$,
where $a$ is a non-zero function, and
$b$ is a vector field which vanishes on $S$.
Let $p\in S$ be a non-degenerate singular point.
We assume that
$d\lambda_\phi(p)\ne0$ and
$\eta_\phi \lambda_\phi(p)=0$.
Then we have
$$
\tilde\eta^2\lambda_\phi
=
a(\eta_\phi a\,\eta_\phi \lambda_\phi+a\,\eta_\phi^2\lambda_\phi
+
\eta_\phi b\lambda_\phi)
+ba\,\eta_\phi \lambda_\phi
+
a\,b\eta_\phi \lambda_\phi+b^2\lambda_\phi.
$$
Since 
$b=0$ on $S$,
$b\lambda_\phi=0$ on $S$,
and since
$\eta_\phi \lambda_\phi(p)=0$, it holds that 
$\eta_\phi b\lambda_\phi(p)=0$.
Thus
$$
\tilde\eta^2\lambda_\phi(p)
=
a^2\,\eta_\phi^2\lambda_\phi(p),
$$
we see
$\tilde\eta^2\lambda_\phi(p)=0$ is equivalent to
$\eta_\phi^2\lambda_\phi(p)=0$.

Next we assume
$d\lambda_\phi(p)\ne0$ and
$\eta_\phi \lambda_\phi(p)=\eta_\phi^2 \lambda_\phi(p)=0$.
We chose a frame $\{e_1,\ldots,e_{r-1},\eta_\phi\}$ of
$D_1$.
Then the condition for swallowtail-like singular point
is equivalent to
$$
\eta_\phi^3\lambda_\phi(p)\ne0
\quad\text{and}\quad
\rank
\pmt{
e_1\lambda_\phi&\cdots&e_{r-1}\lambda_\phi\\
e_1\eta_\phi\lambda_\phi&\cdots&e_{r-1}\eta_\phi\lambda_\phi}
(p)=2.
$$
Then we have
\begin{align*}
\tilde\eta^3\lambda_\phi
=&
a\bigg(
\eta_\phi a
\Big(\eta_\phi a\,\eta_\phi \lambda_\phi+a\,\eta_\phi^2\lambda_\phi
+\eta_\phi b\,\lambda_\phi\Big)\\
&\hspace{10mm}
+a
\Big(
\eta_\phi^2 a\,\eta_\phi \lambda_\phi
+2\eta_\phi a\,\eta_\phi^2\lambda_\phi
+a\, \eta_\phi^3\lambda_\phi+\eta_\phi^2b\lambda_\phi\Big)\\
&\hspace{10mm}
+\eta_\phi ba\,\eta_\phi \lambda_\phi
+ba\,\eta_\phi^2\lambda_\phi
+\eta_\phi a\,b\eta_\phi \lambda_\phi
+a\, \eta_\phi b \eta_\phi \lambda_\phi
+\eta_\phi b^2 \lambda_\phi
\bigg)+b*,
\end{align*}
where $*$ stand for a function.
Since $b\eta_\phi \lambda_\phi$ and $b^2\lambda_\phi$ vanish
on $S$, and by $\eta_\phi \lambda_\phi(p)=0$,
it holds that $\eta_\phi b\eta_\phi \lambda_\phi(p)
=
\eta_\phi b^2\lambda_\phi(p)=0$.
We show $\eta_\phi^2b\lambda_\phi(p)=0$.
Since $b=0$ on $S=\{\lambda_\phi=0\}$, 
and $d\lambda_\phi(p)\ne0$,
there exists a function $k$ such that
$b\lambda_\phi=k\lambda_\phi$.
Since $\eta_\phi\lambda_\phi(p)=\eta_\phi^2\lambda_\phi(p)=0$,
we see $\eta_\phi^2b\lambda_\phi(p)=0$.
Hence
$$
\tilde\eta^3\lambda_\phi(p)
=
a^3 \eta_\phi^3\lambda_\phi.
$$
On the other hand, we have
\begin{equation}\label{eq:notdep1}
\Big(e_i\tilde\eta\lambda_\phi\Big)(p)
=
\Big(e_i(a\,\eta_\phi \lambda_\phi+b\lambda_\phi)\Big)(p)
=
\Big(
e_ia\,\eta_\phi \lambda_\phi
+a\,e_i\eta_\phi \lambda_\phi+e_ib\lambda_\phi
\Big)(p).
\end{equation}
By the above,
$b\lambda_\phi=k\lambda_\phi$ holds, and 
hence the right hand side of \eqref{eq:notdep1} is
\begin{align*}
\Big(
e_ia\,\eta_\phi \lambda_\phi+a\,e_i\eta_\phi \lambda_\phi
+e_i(k\lambda_\phi)
\Big)(p)
&=
\Big(
e_ia\,\eta_\phi \lambda_\phi
+a\,e_i\eta_\phi \lambda_\phi+e_ik\,\lambda_\phi
+k\,e_i\lambda_\phi\Big)(p)\\
&=
a(p)\,e_i\eta_\phi \lambda_\phi(p)+k(p)\,e_i\lambda_\phi(p).
\end{align*}
Thus
\begin{align*}
&\rank\pmt{
e_1\lambda_\phi&\cdots&e_{r-1}\lambda_\phi\\
e_1\tilde\eta\lambda_\phi&\cdots&e_{r-1}\tilde\eta\lambda_\phi}
(p)\\
=&
\rank\pmt{
e_1\lambda_\phi&\cdots&e_{r-1}\lambda_\phi\\
a\,e_1\eta_\phi \lambda_\phi+k\,e_1\lambda_\phi&\cdots
&a\,e_{r-1}\eta_\phi \lambda_\phi+k\,e_{r-1}\lambda_\phi}
(p)\\
=&
\rank\pmt{
e_1\lambda_\phi&\cdots&e_{r-1}\lambda_\phi\\
a\,e_1\eta_\phi \lambda_\phi&\cdots
&a\,e_{r-1}\eta_\phi \lambda_\phi}
(p)
\end{align*}
shows the assertion.
\end{proof}

\subsection{Geometric interpretation of singularities}
We give geometric interpretation
of singularities of bundle homomorphisms.
If $p\in S$ is a non-degenerate singular point,
then $S$ is a codimension one submanifold.
Thus $T_pS\subset TM$ can be defined.
Let us set $m=3$ and $r=2$.
Then we have the following.
\begin{proposition}\label{prop:foldcugeom}
If\/ $p\in S$ is a fold-like singular point,
then\/ $\eta_p\not\in T_pS$.
If\/ $p\in S$ is a cusp-like singular point,
then\/ $S_2=\{p\in S\,|\,\eta_p\in T_pS\}$
is one-dimensional submanifold of\/ $S$,
and\/ $\eta_p\not\in T_pS_2$.
\end{proposition}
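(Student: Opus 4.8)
The plan is to translate the directional-derivative conditions of the definitions into incidence conditions between the null vector and the kernels of the relevant differentials. The starting observation is that near a non-degenerate singular point $S=\{\lambda_\phi=0\}$ is a codimension one submanifold with $T_qS=\ker d\lambda_\phi(q)$, and that for any function $f$, any vector field $\xi$, and any point $q$ one has $(\xi f)(q)=df(q)(\xi_q)$. Writing $\eta_q=(\eta_\phi)_q$ for the value of the null section and applying this to $\xi=\eta_\phi$, $f=\lambda_\phi$ gives the basic identity
\begin{equation}
(\eta_\phi\lambda_\phi)(q)=d\lambda_\phi(q)(\eta_q),
\end{equation}
so that $\eta_q\in T_qS$ is equivalent to $(\eta_\phi\lambda_\phi)(q)=0$. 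Similarly $(\eta_\phi^2\lambda_\phi)(q)=d(\eta_\phi\lambda_\phi)(q)(\eta_q)$. These two identities are the only computational inputs needed.

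For the first assertion, if $p$ is fold-like then $(\eta_\phi\lambda_\phi)(p)\ne0$; by the identity this forces $d\lambda_\phi(p)\ne0$, so $p$ is automatically non-degenerate and $T_pS$ is defined, while at the same time $d\lambda_\phi(p)(\eta_p)\ne0$ says precisely $\eta_p\notin\ker d\lambda_\phi(p)=T_pS$.

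For the second assertion, I would first identify $S_2$ locally. Since non-degeneracy is an open condition, every singular point $q$ near a cusp-like point $p$ is non-degenerate, so $T_qS$ is defined, and by the identity $\eta_q\in T_qS$ holds iff $(\eta_\phi\lambda_\phi)(q)=0$. Hence near $p$ one has $S_2=\{\lambda_\phi=0\}\cap\{\eta_\phi\lambda_\phi=0\}$. The key step is to show that $d\lambda_\phi(p)$ and $d(\eta_\phi\lambda_\phi)(p)$ are linearly independent. Both are nonzero ($d\lambda_\phi(p)\ne0$ by non-degeneracy, and $d(\eta_\phi\lambda_\phi)(p)(\eta_p)=(\eta_\phi^2\lambda_\phi)(p)\ne0$ by the cusp-like condition), and I test them against $\eta_p$: the first identity gives $d\lambda_\phi(p)(\eta_p)=(\eta_\phi\lambda_\phi)(p)=0$, while the second gives $d(\eta_\phi\lambda_\phi)(p)(\eta_p)=(\eta_\phi^2\lambda_\phi)(p)\ne0$. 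Thus $\eta_p$ lies in $\ker d\lambda_\phi(p)$ but not in $\ker d(\eta_\phi\lambda_\phi)(p)$; since two nonzero covectors are proportional iff their kernels coincide, the two differentials cannot be proportional and are independent. Therefore $S_2$ is cut out transversally by two functions in a $3$-manifold, hence a one-dimensional submanifold with $T_pS_2=\ker d\lambda_\phi(p)\cap\ker d(\eta_\phi\lambda_\phi)(p)$, and $\eta_p\notin\ker d(\eta_\phi\lambda_\phi)(p)$ immediately yields $\eta_p\notin T_pS_2$.

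The computations are elementary; the only genuine content is the linear independence of $d\lambda_\phi(p)$ and $d(\eta_\phi\lambda_\phi)(p)$, established by evaluating both on $\eta_p$. The point to be careful about is that the definition of $S_2$ through $T_qS$ is meaningful only because non-degeneracy propagates to an open neighborhood of $p$, which is exactly what permits rewriting $S_2$ as the common zero set $\{\lambda_\phi=0,\ \eta_\phi\lambda_\phi=0\}$ before invoking the regular-value argument.
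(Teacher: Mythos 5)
Your proof is correct and follows essentially the same route as the paper: identify $T_qS$ with $\ker d\lambda_\phi(q)$ so that $\eta_q\in T_qS$ is equivalent to $\eta_\phi\lambda_\phi(q)=0$, realize $S_2$ as $\{\lambda_\phi=\eta_\phi\lambda_\phi=0\}$, and deduce the rank-two condition on $d(\lambda_\phi,\eta_\phi\lambda_\phi)(p)$ from $d\lambda_\phi(p)\ne0$, $\eta_\phi\lambda_\phi(p)=0$ and $\eta_\phi^2\lambda_\phi(p)\ne0$. You merely spell out the linear-independence step (testing both covectors against $\eta_p$) that the paper leaves implicit.
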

\begin{proof}
Since $S=\{\lambda_\phi=0\}$,
the first assertion is obvious.
By non-degeneracy, 
$\eta_\phi\lambda_\phi(p)=0$ and
$\eta_\phi^2\lambda_\phi(p)\ne0$,
it holds that
$d(\lambda_\phi,\eta_\phi\lambda_\phi)(p)\ne0$.
Since
$S_2=\{\lambda_\phi=\eta_\phi\lambda_\phi=0\}$,
$S_2$ is a one-dimensional submanifold of $S$.
The last assertion is obvious from 
$\eta_\phi^2\lambda_\phi(p)\ne0$.
\end{proof}
If $p$ is a fold-like singular point,
and
$(D_1)_p=T_pS$, then $(\eta_\phi)_p\in T_pS$.
Thus $(D_1)_p\ne T_pS$.
Let $p$ be a cusp-like singular point.
If $e_1\lambda_\phi=e_2\lambda_\phi=0$ at $p$,
then $(D_1)_p=T_pS$.
In this case, we call $p$ {\it cusp-like singular point
of tangent type}.
If $(e_1\lambda_\phi,e_2\lambda_\phi)\ne(0,0)$ at $p$,
then $(D_1)_p$ is transversal to $T_pS$.
In this case, we call $p$ {\it cusp-like singular point
of transverse type}.
The picture of $S$ and $D_1$ can be drawn in Figure \ref{fig:1}.
\begin{figure}[!ht]
\centering
\includegraphics[width=.25\linewidth]{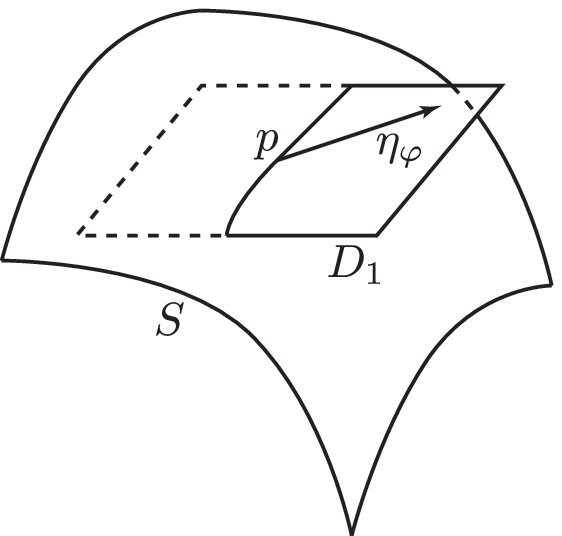}
\hspace{10mm}
\raisebox{6mm}{
\includegraphics[width=.25\linewidth]{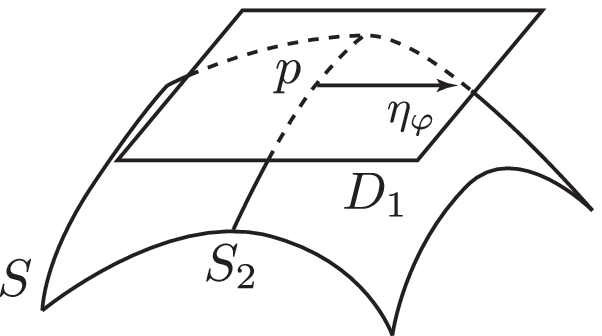}}
\hspace{10mm}
\includegraphics[width=.25\linewidth]{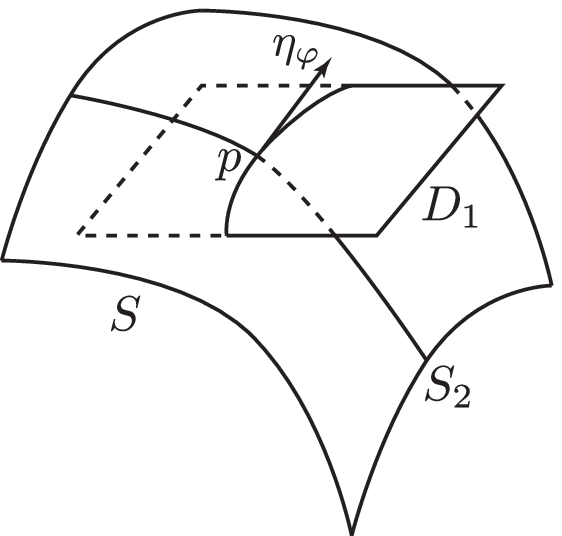}
\caption{$S$ and $D_1$ of fold-like singular point,
and cusp-like singular points
of tangent and transverse types. \label{fig:1}}
\end{figure}

If $p\in S$ is a swallowtail-like singular point,
then $S_2$ is one-dimensional submanifold of $S$.
Let $(u,v)$ be a coordinate system near $p$ of $S$.
Let $\gamma(t)=(\gamma_1(t),\gamma_2(t))$ $(\gamma(0)=p)$ 
be a parameterization
of $S_2$ with respect to $(u,v)$,
and let $\eta_{\gamma(t)}=a(t)\partial_u+b(t)\partial_v$.
Then we have the following.
\begin{proposition}\label{prop:swgeom}
Let\/ $p\in S$ is a swallowtail-like singular point.
We set 
$$
\mu(t)=\pmt{
\gamma_1(t)&a(t)\\
\gamma_2(t)&b(t)}.
$$
Under the above notation, it holds that
$$
\mu(0)=0,\quad\mu'(0)\ne0.
$$
\end{proposition}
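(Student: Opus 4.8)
The content of the proposition is the classical swallowtail criterion: the null direction $(\eta_\phi)_p$ becomes tangent to the curve $S_2$ at $p$, and this contact is of first order. I therefore read the two displayed equalities as asserting that $\det\mu$ has a simple zero at $t=0$, where I take the columns of $\mu$ to be the velocity $\gamma'(t)$ of $S_2$ and the null vector $(\eta_\phi)_{\gamma(t)}$; the matrix itself cannot vanish, since $(a(0),b(0))=(\eta_\phi)_p\neq0$. The plan is to pass to the surface $S$ and to reduce both assertions to the swallowtail conditions $\eta_\phi^2\lambda_\phi(p)=0$ and $\eta_\phi^3\lambda_\phi(p)\neq0$.

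First I would set up the objects. Write $h:=\eta_\phi\lambda_\phi|_S$, so that $S_2=\{h=0\}$ inside $S$. Since the swallowtail hypothesis gives $\rank d(\lambda_\phi,\eta_\phi\lambda_\phi,\eta_\phi^2\lambda_\phi)(p)=3$, the forms $d\lambda_\phi$ and $d(\eta_\phi\lambda_\phi)$ are independent at $p$, hence $dh(p)=d(\eta_\phi\lambda_\phi)|_{T_pS}\neq0$ and $S_2$ is a regular curve; this produces the parameterization $\gamma$ with $\gamma'(0)\neq0$. The same rank hypothesis already forces $\eta_\phi^3\lambda_\phi(p)\neq0$: in a frame of $T_pM$ containing $(\eta_\phi)_p$, the $\eta_\phi$-column of the Jacobian of $(\lambda_\phi,\eta_\phi\lambda_\phi,\eta_\phi^2\lambda_\phi)$ equals $(0,0,\eta_\phi^3\lambda_\phi)^{\top}$ at $p$, so rank $3$ is impossible if it vanished. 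Finally, because $\eta_\phi\lambda_\phi=0$ on $S_2$, the null section is genuinely tangent to $S$ there, which both legitimizes the expression $(\eta_\phi)_{\gamma(t)}=a\partial_u+b\partial_v$ and yields, along $S_2$, the identification $\eta_\phi h=\eta_\phi^2\lambda_\phi$.

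The computational heart is the identity, valid along $S_2$,
\[
\gamma_1'(t)\,b(t)-\gamma_2'(t)\,a(t)=-\kappa(t)\,\eta_\phi^2\lambda_\phi(\gamma(t)),
\]
which I would obtain by differentiating $h(\gamma(t))=0$ to get $\gamma'(t)=\kappa(t)(-h_v,h_u)$ with $\kappa(0)\neq0$, and then recognizing $h_u a+h_v b=\eta_\phi h=\eta_\phi^2\lambda_\phi$. Evaluating at $t=0$ and using $\eta_\phi^2\lambda_\phi(p)=0$ gives the first assertion $\det\mu(0)=0$; geometrically this says $\gamma'(0)\parallel(\eta_\phi)_p$, i.e. $(\eta_\phi)_p\in T_pS_2$. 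Differentiating the identity at $t=0$, the $\kappa'$-term drops out and leaves $(\det\mu)'(0)=-\kappa(0)\,d(\eta_\phi^2\lambda_\phi)_p(\gamma'(0))$.

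The last step is where the two assertions interlock, and it is the point I expect to be most delicate. Having just shown $\gamma'(0)\parallel(\eta_\phi)_p$, I would write $\gamma'(0)=s\,(\eta_\phi)_p$ with $s\neq0$ and convert the directional derivative into $d(\eta_\phi^2\lambda_\phi)_p(\gamma'(0))=s\,\eta_\phi^3\lambda_\phi(p)$; since $\kappa(0)\neq0$, $s\neq0$ and $\eta_\phi^3\lambda_\phi(p)\neq0$, this is nonzero, which is the second assertion. The main obstacles are the bookkeeping in the identification $\eta_\phi h=\eta_\phi^2\lambda_\phi$ on $S_2$ — the null direction must be genuinely tangent to $S$ there — and the structural point that the first assertion is consumed in proving the second; care is also needed to keep $\kappa$ and $s$ away from zero, both of which trace back to non-degeneracy.
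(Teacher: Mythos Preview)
Your argument is correct and takes a genuinely different route from the paper's. The paper normalizes coordinates on $S$ so that $\partial_v=(\eta_\phi)_p$, writes $S_2$ as a graph $(\gamma_1(v),v)$ and $\eta_\phi=a\partial_u+\partial_v$, and then computes $\mu=\gamma_1'-a$ and $\mu'(0)=\gamma_1''(0)-a'(0)$ by expanding $\eta_\phi^3\lambda_\phi(p)$ and the equation $\eta_\phi\lambda_\phi(\gamma_1(v),v)=0$ directly in terms of $\lambda_{uv},\lambda_{vvv},a_v$; it is an explicit coordinate calculation. Your approach is more invariant: you package the computation into the single identity $\det\mu(t)=-\kappa(t)\,\eta_\phi^2\lambda_\phi(\gamma(t))$ along $S_2$, obtained from $\gamma'\parallel(-h_v,h_u)$ and $\eta_\phi h=\eta_\phi^2\lambda_\phi$ on $S_2$, and then a single differentiation plus the observation $\gamma'(0)\parallel(\eta_\phi)_p$ converts $(\det\mu)'(0)$ into a nonzero multiple of $\eta_\phi^3\lambda_\phi(p)$. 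This avoids the adapted coordinates and the second-order coefficient chase, and it makes transparent why $\mu(0)=0$ is exactly what is needed to prove $\mu'(0)\neq0$; the paper's version, by contrast, gives the explicit constant $-\bigl(3a_v\lambda_{uv}+\lambda_{vvv}\bigr)/\lambda_{uv}$ at $p$.
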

\begin{proof}
Let $(u,v)$ be a coordinate system of $S$
satisfying $\partial_v=\eta_p$.
If we assume that $(\eta\lambda_\phi)_u=0$,
then since $S=\{\lambda_\phi=0\}$, it holds that
$(\lambda_\phi)_u=0$.
This contradicts to 
$\rank (\lambda_\phi,\eta_\phi\lambda_\phi,\eta_\phi^2\lambda_\phi)(p)=3$.
Since $(\eta\lambda_\phi)_u\ne0$,
we have a parametrization of $\gamma$ as $\gamma(t)=(\gamma_1(t),t)$.
Since $\eta_\phi^2\lambda_\phi(p)=0$, we have
$\gamma_1'(0)=0$.
On the other hand, we may take
$\eta_{\gamma(t)}=a(t)\partial_u+\partial_v$ $(a(0)=0)$.
Then $\mu(t)=\gamma_1'(t)-a(t)$.

Since $\eta_\phi^3\lambda_\phi(p)\ne0$
and $a(0)=0$, $(\lambda_\phi)_u(p)=0$,
we have
\begin{equation}\label{eq:swprf100}
3a_v(p)\lambda_{uv}(p)+\lambda_{vvv}(p)\ne0.
\end{equation}
On the other hand,
since $\eta_\phi \lambda_\phi(\gamma_1(v),v))=0$,
we have
\begin{equation}\label{eq:swprf200}
\gamma_1''(0)
=
-\dfrac{2a_v(p)\lambda_{uv}(p)+\lambda_{vvv}(p)}
{\lambda_{uv}(p)}.
\end{equation}
By \eqref{eq:swprf100} and \eqref{eq:swprf200},
we have 
$$
\mu'(0)=\gamma_1''(0)-a'(0)
=
-\dfrac{
3a_v(p)\lambda_{uv}(p)+\lambda_{vvv}(p)}
{\lambda_{uv}(p)}\ne0.
$$
\end{proof}
Like as the case of cusp-like singular point,
swallowtail-like singular point has
tangent and transverse types.
If $e_1\lambda_\phi=e_2\lambda_\phi=0$ at $p$,
then $(D_1)_p=T_pS$.
In this case, we call $p$ {\it swallowtail-like singular point
of tangent type}.
If $(e_1\lambda_\phi,e_2\lambda_\phi)\ne(0,0)$ at $p$,
then $(D_1)_p$ is transversal to $T_pS$.
In this case, we call $p$ {\it swallowtail-like singular point
of transverse type\/} (Figure \ref{fig:2}).
Ignoring arrangements of $D_1$, 
relationship of $S,S_2$ and $\eta_\phi$ is similar
to that of the 
Morin singularities of $(\R^3,0)\to(\R^3,0)$ (\cite{suy3}).
\begin{figure}[!ht]
\centering
\raisebox{6mm}{
\includegraphics[width=.3\linewidth]{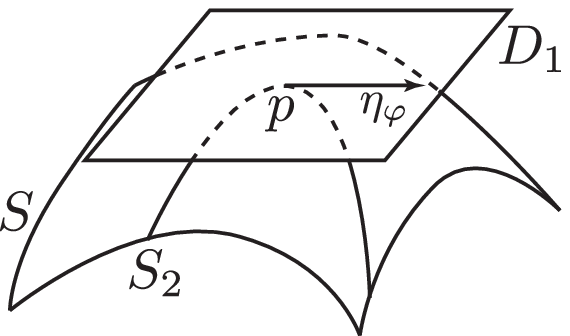}}
\hspace{15mm}
\includegraphics[width=.3\linewidth]{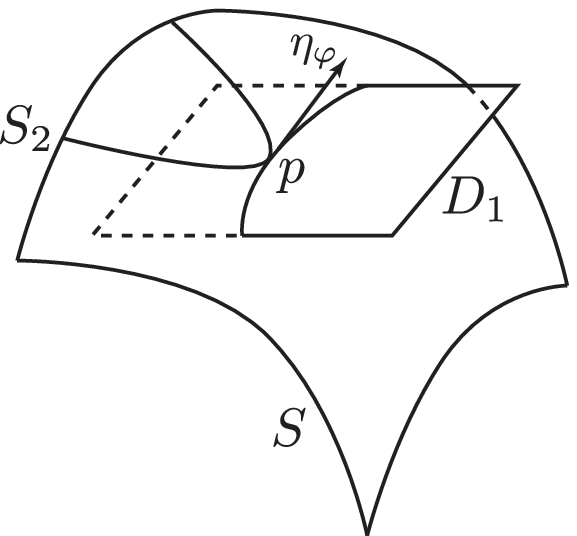}
\caption{$S$ and $D_1$ of 
swallowtail-like singular points of 
tangent and transverse types.}
\label{fig:2}
\end{figure}

\section{Generic singularities}\label{sec:generic}
We show if $m=3$ and $r=2$, then
the generic singularities of $\phi$ is
fold-like, cusp-like and swallowtail-like singular points.
The bundle homomorphism $\phi$
can be regarded as a section of the homomorphism bundle
$\hom(D_1,D_2)$. We set $E=\hom(D_1,D_2)$.
Since the set of sections $\Gamma(E)$ is a subset
of $C^\infty(M,E)$, we derive
the Whitney $C^\infty$ topology
to $\Gamma(E)$.
\begin{proposition}\label{prop:dense}
Under the above settings,
the set
$$
\{\phi\in \Gamma(E)\,|\,
\text{any\/ }p\in S
\text{ is 
fold-like, cusp-like or swallowtail-like\/}\}
$$
is dense.
\end{proposition}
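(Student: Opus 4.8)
The plan is to invoke Thom's jet transversality theorem for sections of $E=\hom(D_1,D_2)$. First I would fix $k=3$ and pass to the jet bundle $J^k(M,E)$ of $k$-jets of sections; over a chart equipped with frames of $D_1$ and $D_2$ a section $\phi$ is recorded by the matrix $M_\phi$ and its partial derivatives up to order $k$. The point is that, by the frame-independence lemma proved above, the quantities $\lambda_\phi$, $\eta_\phi\lambda_\phi$, $\eta_\phi^2\lambda_\phi$, $\eta_\phi^3\lambda_\phi$ together with the rank condition of the definition are intrinsic and, being universal (smooth) expressions in $M_\phi$ and its derivatives, their values at $p$ depend only on the $k$-jet $j^k\phi(p)$. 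Hence the singular set and each of its refinements determine subsets of $J^k(M,E)$, and I may phrase the whole argument there.

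Next I would isolate the \emph{bad} locus $\mathcal{B}\subset J^k(M,E)$ consisting of jets that are singular ($\det M_\phi=0$) but neither fold-like, cusp-like, nor swallowtail-like. Reading off the definitions, $\mathcal{B}$ is contained in the union of three pieces: (i) the corank-two jets, where $M_\phi=0$; (ii) the degenerate corank-one jets, where $\lambda_\phi=0$ and $d\lambda_\phi=0$; and (iii) the beyond-swallowtail jets, where $\phi$ is corank one and non-degenerate, $\eta_\phi\lambda_\phi=\eta_\phi^2\lambda_\phi=0$, and the swallowtail rank condition fails (equivalently, in the notation of the proof above, $\eta_\phi^3\lambda_\phi=0$ or the auxiliary $2\times 2$ determinant vanishes). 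Each piece is frame-independent, so $\mathcal{B}$ is intrinsically defined.

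Then I would compute the codimensions of these pieces in $J^k(M,E)$. On the open substratum where $\rank M_\phi=1$ the smallest eigenvalue is simple, so $\eta_\phi$ is a smooth function of the jet; using the frame-independence lemma I may normalize frames so that $\eta_\phi$ equals a fixed frame vector $e_2$ of $D_1$, whereupon $\eta_\phi^{\,j}\lambda_\phi$ becomes the $j$-th derivative of $\lambda_\phi$ in the $e_2$-direction and therefore depends on the order-$j$ part of the jet but not on lower orders. A direct count then gives: piece (i) has codimension $4$ (all four entries of $M_\phi$ must vanish); piece (ii) has codimension $1+3=4$, since $\lambda_\phi=0$ constrains the $0$-jet while the three components of $d\lambda_\phi=0$ constrain the free $1$-jet independently (here $d(\det)\neq0$ on the rank-one stratum); and piece (iii) has codimension $4$, since on the non-degenerate stratum $\lambda_\phi=\eta_\phi\lambda_\phi=\eta_\phi^2\lambda_\phi=0$ are three independent conditions using successively higher jet orders, and the failure of the rank condition contributes one further independent equation. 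Thus $\mathcal{B}$ is a finite union of locally closed smooth submanifolds of $J^k(M,E)$, each of codimension at least $4$.

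Finally, since $\dim M=3<4$, Thom's jet transversality theorem provides a residual---hence, as $\Gamma(E)$ is a Baire space in the Whitney $C^\infty$ topology, dense---set of $\phi$ for which $j^k\phi$ is transverse to each of the finitely many strata of $\mathcal{B}$; but transversality to a submanifold of codimension exceeding $\dim M$ forces $j^k\phi(M)\cap\mathcal{B}=\emptyset$. For every such $\phi$, each singular point is fold-like, cusp-like, or swallowtail-like, which is the claim. I expect the main obstacle to be the codimension count in step three: because $\eta_\phi$ is extracted as an eigenvector it is only smooth, not polynomial, in the jet, so one must check with some care that the differentials of $\lambda_\phi,\eta_\phi\lambda_\phi,\eta_\phi^2\lambda_\phi$ (and of $\eta_\phi^3\lambda_\phi$) are linearly independent as functions on the corank-one jet stratum. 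The frame normalization above is precisely what reduces this to the transparent fact that coordinate derivatives of $\lambda_\phi$ of distinct orders are independent jet coordinates.
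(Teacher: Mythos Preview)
Your proposal is correct and follows essentially the same route as the paper: both stratify the bad jets into the corank-two locus, the degenerate corank-one locus, and the beyond-swallowtail locus (the paper splits your piece~(iii) explicitly into $W_1$ and $W_2$), verify each has codimension~$4$ via a frame/coordinate normalization making $\eta_\phi$ a coordinate direction, and then apply jet transversality for sections to conclude. The paper's Lemma~3.3 is exactly the explicit codimension computation you flag as the main obstacle, carried out by the same normalization trick you propose.
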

For the proof of Proposition \ref{prop:dense},
we need jet transversality theorem for vector bundle
sections.
Let $J^k(\Gamma(E))$ be the subbundle
of $J^k(M,E)$ consisting of all $k$-jets
of sections.
Let $j^k:\Gamma(E)\to C^\infty(M,J^k(\Gamma(E)))$
be the jet-extension.
\begin{proposition}\label{prop:trans}
Let\/ $M$ be a manifold and let\/ $K$
be a submanifold of\/ $J^k(\Gamma(E))$. 
Then the set 
$$
\{f\in \Gamma(E) \,|\,
j^k f \text{ is transverse to\/ }K\}
$$ is residual in\/ $\Gamma(E)$,
and open dense if\/ $K$ is closed.
\end{proposition}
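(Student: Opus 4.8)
The statement is the section analogue of Thom's jet transversality theorem, and the plan is to adapt the classical proof while exploiting the fact that $\Gamma(E)$ is a module over $C^\infty(M)$, so that perturbations are carried out by simply adding sections. First I would reduce the global statement to a local one by a Baire category argument. Choose a countable, locally finite cover $\{U_i\}$ of $M$ by open sets over which $E$ is trivial and which are contained in coordinate charts, together with compact sets $C_i\subset U_i$ whose interiors still cover $M$. Writing $T_i=\{f\in\Gamma(E)\mid j^kf \text{ is transverse to } K \text{ at every point of } C_i\}$, the set in the statement is exactly $\bigcap_i T_i$, since the interiors of the $C_i$ cover $M$. As $\Gamma(E)$ with the Whitney $C^\infty$ topology is a Baire space, it then suffices to prove that each $T_i$ is dense, and open when $K$ is closed; residuality (respectively open density, using local finiteness) follows formally.

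The heart of the argument, and the step I expect to be the main obstacle, is a local submersion lemma. Fix $i$ and trivialize $E|_{U_i}\cong U_i\times\R^N$, so that over $U_i$ a section is a map $g:U_i\to\R^N$ and $j^kg(p)$ is recorded by the Taylor coefficients of $g$ at $p$ up to order $k$. Let $\chi\in C^\infty(M)$ be a cutoff that is identically $1$ on a neighborhood $V_i$ of $C_i$ and supported in $U_i$. For a parameter $s=(s_{\alpha,j})_{|\alpha|\le k,\,1\le j\le N}\in\R^L$, set $f_s=f+\chi\cdot P_s$, where $P_s=\sum_{\alpha,j}s_{\alpha,j}\,x^\alpha e_j$ is a vector of polynomials of degree $\le k$ (extended by $0$ off $U_i$ via $\chi$); because $\Gamma(E)$ is a module, each $f_s$ is again a global section, which is precisely where the linear structure of the space of sections is essential. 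The key point is that the map $F(s,p)=j^k(f_s)(p)$, defined on $\R^L\times V_i$, is a submersion: on $V_i$ one has $F(s,p)=j^kf(p)+j^kP_s(p)$, and the linear map $s\mapsto j^kP_s(p)$ is onto the jet fiber, since a vector polynomial of degree $\le k$ both is determined by, and realizes arbitrarily, its $k$-jet at any single point. A submersion is automatically transverse to any submanifold $K$. The difficulty here is exactly choosing a finite-dimensional family of admissible perturbations rich enough to surject onto the jet fibers while staying inside $\Gamma(E)$, which at the same time reduces the infinite-dimensional problem to a finite-dimensional one and avoids Banach-manifold machinery.

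Granted the lemma, I would finish as follows. Apply the parametric transversality theorem, in its finite-dimensional form as a consequence of Sard's theorem, to $F:\R^L\times V_i\to J^k(\Gamma(E))$: since $F$ is transverse to $K$, the set of parameters $s$ for which $F_s=j^k(f_s)|_{V_i}$ is transverse to $K$ has full measure, hence meets every neighborhood of $0\in\R^L$. Because the perturbation $\chi P_s$ is $C^\infty$-small for small $s$ and supported in the fixed set $U_i$, this shows that $T_i$ is dense. When $K$ is closed, transversality on the compact set $C_i$ to $K$ is an open condition in the $C^k$-topology, so $T_i$ is open. Finally, $\bigcap_i T_i$ is residual by the Baire property, and, using local finiteness of $\{C_i\}$ together with the closedness of $K$ (so that near each point only finitely many of the open conditions $T_i$ are active), it is open dense. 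This is precisely the assertion.
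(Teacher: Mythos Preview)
Your proof is correct and is the standard Thom--Sard argument for section jet transversality: reduce to a countable family of local problems via a locally finite cover, build a finite-dimensional family of cutoff polynomial perturbations that surjects onto the jet fiber, apply parametric transversality, and handle openness via closedness of $K$ together with local finiteness.

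The paper, however, does not actually give a proof. It simply cites \cite[Theorem~2.6]{trans}, where the statement is proved for sections of the tangent bundle, and observes that the argument there uses only local triviality and hence carries over verbatim to an arbitrary vector bundle $E$. What you have written is precisely the kind of argument that citation points to, spelled out in detail and already phrased for general $E$. So your route is not different from the paper's---it is the content the paper is outsourcing. The only thing you might add for completeness is an explicit reference to the fact that $\Gamma(E)$ with the Whitney $C^\infty$ topology is Baire, which you use but do not justify; this is standard (it follows, for instance, from the corresponding fact for $C^\infty(M,E)$ restricted to the closed subspace of sections).
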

This is shown \cite[Theorem 2.6]{trans},
for sections of the tangent bundle.
However the proof uses the local triviality
of the tangent bundle,
so the same proof works for the case
interchanging the tangent bundle
to a general vector bundle $E$.
\begin{proof}[Proof of Proposition\/ {\rm \ref{prop:dense}}]
We set
$$
Z=\{j^3\phi(p)\in J^3(M,E)\,|\,
\phi(p)=O\}.
$$
Then $Z$ is independent of the choice of frames, and
a closed submanifold 
of codimension $4$, and $J^3(M,E)\setminus Z$
is an open submanifold.
Next, we set 
\begin{align*}
D=&\{j^3\phi(p)\in J^3(M,E)\,|\,
\det\phi(p)=0,\ 
d\det\phi(p)=(0,0,0)\}.
\end{align*}
Then $D$ is independent of the choice of frames, and
are closed
submanifolds of $J^3(M,E)\setminus Z$
of codimension $4$.
Next we consider
\begin{align*}
W_1=&\Big\{j^3\phi(p)\in J^3(M,E)\,\Big|\,
\det\phi(p)=0,\ 
\eta\det\phi(p)=0,\\
&\hspace{38mm}
\eta\eta\det\phi(p)=0,\ 
\eta\eta\eta\det\phi(p)=0\Big\},\\
W_2=&\Big\{j^3\phi(p)\in J^3(M,E)\,\Big|\,
\det\phi(p)=0,\ 
\eta\det\phi(p)=0,\\
&\hspace{38mm}
\eta\eta\det\phi(p)=0,\ 
\rank d(\det\phi,\eta\det\phi)(p)=1\Big\}.
\end{align*}
Then $W_1,W_2$ are independent of the choice of frames, and
if they are closed
submanifolds of $J^3(M,E)\setminus (Z\cup D)$
of codimension $4$.
By Proposition \ref{prop:trans},
$$
{\cal O}=\{\phi\in \Gamma(E)\,|\,
j^3\phi\text{ is transverse to }Z,\ D,\ W_1\text{ and } W_2\}.
$$
is a residual subset of $\Gamma(E)$.
So is dense.
On the other hand,
since $\dim M=3$,
$j^3\phi$ is transverse to $Z$, $D$, $W_1$ and $W_2$ is equivalent to
$j^3\phi(M)\cap (Z\cup D\cup W_1\cup W_2)=\emptyset$.
Thus, for any $\phi\in{\cal O}$ has 
only 
fold-like, cusp-like and swallowtail-like singular points as
singular points.
Thus the proof is reduced to showing the following lemma.
\end{proof}
\begin{lemma}\label{prop:submfd}
The sets\/ $W_1,W_2$ are closed
submanifolds of\/ $J^3(M,E)\setminus (Z\cup D)$
of codimension\/ $4$.
\end{lemma}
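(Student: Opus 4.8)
The plan is to realize each of $W_1,W_2$ as the regular zero locus of an explicit map on the open set $J^3(M,E)\setminus(Z\cup D)$ and to read off the codimension from a submersion count. Fix local coordinates $(u_1,u_2,u_3)$ on $M$ and local frames of $D_1,D_2$, so that $\phi$ is represented by a $2\times2$ matrix $M_\phi=(a_{ij})$ and put $\lambda=\det\phi=\det M_\phi$, $\eta=\eta_\phi$. Off $Z$ the matrix has rank at least one, so by the lemma producing the null section $\eta$ is a well-defined smooth section of $D_1$ determined pointwise by $M_\phi$; consequently $\eta^k\lambda$ is a function on the jet space depending only on the $k$-jet for $k\le3$, and off $Z\cup D$ we have $d\lambda\ne0$ wherever $\lambda=0$. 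At any point of $W_1,W_2$ or of $Y:=\{\lambda=\eta\lambda=\eta^2\lambda=0\}$ the matrix has rank exactly one; since the defining conditions are independent of the frames and of the null section (as established above), I may normalize so that there $\eta=\partial_{u_1}$ and $M_\phi=\mathrm{diag}(0,1)$, i.e. $a_{11}=a_{12}=a_{21}=0$, $a_{22}=1$.

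First I would treat $W_1$ and $Y$ together by a triangular argument in the jet order. The pure derivative $\partial_{u_1}^{\,k}a_{11}$ is a $k$-jet coordinate occurring linearly in $\eta^k\lambda$ with coefficient $a_{22}=1\ne0$ (all other contributions to this coordinate carry a factor $a_{11},a_{12}$ or $a_{21}$, which vanish), while it does not occur in $\lambda,\eta\lambda,\dots,\eta^{k-1}\lambda$, each of which depends only on the $(k-1)$-jet. Hence $d\lambda,d(\eta\lambda),d(\eta^2\lambda),d(\eta^3\lambda)$, viewed as covectors on the jet space, each introduce a new coordinate unseen by their predecessors, so they are linearly independent. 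Therefore $(\lambda,\eta\lambda,\eta^2\lambda,\eta^3\lambda)$ is a submersion along its zero set, exhibiting $W_1$ as a closed submanifold of codimension $4$; the first three components already show that $Y$ is a closed submanifold of codimension $3$.

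The delicate point is $W_2$, whose codimension naively looks like $3+2=5$, since a rank drop from two to one of a $2\times3$ matrix is two conditions. The resolution is that on $Y$ the two rows are already constrained: $\eta\lambda=\langle d\lambda,\eta\rangle=0$ and $\eta^2\lambda=\langle d(\eta\lambda),\eta\rangle=0$ force both $d\lambda$ and $d(\eta\lambda)$ to lie in the annihilator $\{\omega\in T_p^*M:\omega(\eta)=0\}$, a two-dimensional subspace since $\eta\ne0$ and $m=3$. As $d\lambda\ne0$ off $Z\cup D$, within this plane the condition $\rank d(\lambda,\eta\lambda)=1$ is the collinearity of two vectors one of which is nonzero, i.e. the vanishing of a single $2\times2$ minor $m_2=\lambda_{u_2}(\eta\lambda)_{u_3}-\lambda_{u_3}(\eta\lambda)_{u_2}$. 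I would then check that $m_2$ is transverse to $Y$: using $(\lambda_{u_2},\lambda_{u_3})(p)\ne0$, a mixed second-order coordinate $\partial_{u_1}\partial_{u_3}a_{11}$ (or $\partial_{u_1}\partial_{u_2}a_{11}$, according to which of $\lambda_{u_2},\lambda_{u_3}$ is nonzero) appears in $m_2$ through $(\eta\lambda)_{u_3}$ with nonzero coefficient, yet does not appear in $\lambda,\eta\lambda,\eta^2\lambda$, whose second-order content at the point is the pure $\partial_{u_1}^2a_{11}$. Thus $d\lambda,d(\eta\lambda),d(\eta^2\lambda),dm_2$ are independent, so $W_2=Y\cap\{m_2=0\}$ is a closed submanifold of codimension $3+1=4$. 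I expect this confinement of $d\lambda,d(\eta\lambda)$ to the annihilator of $\eta$ and the attendant transversality of the rank-one locus to $Y$ to be the main obstacle; the submersion statements for $W_1$ and $Y$ are then routine consequences of the triangular dependence on jet order.
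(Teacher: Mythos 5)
Your argument is correct and follows essentially the same route as the paper's proof: adapted coordinates in which $\eta$ is a coordinate vector field, the reduction of the rank condition in $W_2$ to the single minor $h_5=\lambda_u(\eta\lambda)_v-\lambda_v(\eta\lambda)_u$ (legitimate because $d\lambda$ and $d(\eta\lambda)$ annihilate $\eta$ on the locus in question and $d\lambda\ne0$ off $D$), and a triangular Jacobian computation using the pure $\eta$-derivative jet coordinates for $W_1$ and the mixed second-order ones for $W_2$. The only cosmetic difference is that you normalize $M_\phi$ to $\operatorname{diag}(0,1)$ at the point, whereas the paper keeps a general rank-one matrix and invokes $(a,b,c,d)\ne(0,0,0,0)$.
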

\begin{proof}
Let $p\in M$ and take a coordinate neighborhood $U$
near $p$. 
It is enough to show that $W_1,W_2$ are closed submanifolds
in $J^3(U,E|_U)\setminus (Z\cup D)$.
Since $W_1,W_2$ are independent of the choice of
coordinate system,
we choose a coordinate system $(u,v,w)$ on $U$
satisfying $\partial_w=\eta$.
Let 
$$
j^3\phi(p)
=
\pmt{j^3a(p)&j^3b(p)\\
     j^3c(p)&j^3d(p)},
$$
where $a,b,c,d$ are functions.
Then in $J^3(U,E|_U)\setminus (Z\cup D)$,
\begin{align*}
&W_1=\Biggl\{\left.\pmt{j^3a(p)&j^3b(p)\\
     j^3c(p)&j^3d(p)}\,\right|\,p\in U,\ 
h_1(p)=h_2(p)=h_3(p)=h_4(p)=0\Biggr\},\\
&W_2=\Biggl\{\left.\pmt{j^3a(p)&j^3b(p)\\
     j^3c(p)&j^3d(p)}\,\right|\,p\in U,\ 
h_1(p)=h_2(p)=h_3(p)=h_5(p)=0\Biggr\},
\end{align*}
where
$h_1=ad-bc$,
$h_2=(ad-bc)_w$,
$h_3=(ad-bc)_{ww}$,
$h_4=(ad-bc)_{www}$, and
$$
h_5=(ad-bc)_{u}(ad-bc)_{vw}-(ad-bc)_{v}(ad-bc)_{uw}.
$$
We define two functions
$H_i:J^3(U,E|_U)\setminus (Z\cup D)\to\R^4$
$(i=1,2)$ by
$H_1=(h_1,h_2,h_3,h_4)$, $H_2=(h_1,h_2,h_3,h_5)$.
Then it is sufficient to show that
$(0,0,0,0)$ is a regular value of each $H_1$ and $H_2$.
We calculate the derivative of $H_1$ 
with respect to the $16$ coordinates of $J^3(U,E|_U)$ 
corresponding to the zero, first, second and third derivatives 
by $\partial_w$ of $a,b,c,d$.
The matrix representation of them is
$$
\left(
\begin{array}{c|cccc}
{\cal M}&\\
\hline
*&d&-c&-b&a
\end{array}
\right),\quad
{\cal M}=
\left(
\begin{array}{cccccccccccccccc}
d&-c&-b&a& &  &  & & &  &  & \\
*& *& *&*&d&-c&-b&a& &  &  & \\
*& *& *&*&*& *& *&*&d&-c&-b&a\\
\end{array}
\right)
$$
where the blank entries are $0$.
Since $(a,b,c,d)\ne(0,0,0,0)$, we have the assertion for
$H_1$.
Next we calculate the derivative of $H_2$ 
with respect to the $20$ coordinates of $J^3(U,E|_U)$ 
corresponding to the zero, first, second derivatives 
by $\partial_w$ of $a,b,c,d$ and
corresponding to the derivatives by $\partial_u$, $\partial_w$,
and $\partial_v$, $\partial_w$ of $a,b,c,d$.
The matrix representation of them is
$$
\left(
\begin{array}{c|ccccccccc}
{\cal M} &  &  & & &  &  & \\
\hline
*&d(h_1)_v&-c(h_1)_v&-b(h_1)_v&a(h_1)_v
&
d(h_1)_u&-c(h_1)_u&-b(h_1)_u&a(h_1)_u\\
\end{array}
\right),
$$
Since $(a,b,c,d)\ne(0,0,0,0)$, 
and $(h_1)_u(p)=(h_1)_v(p)=h_2(p)=0$ means
$d \det\phi(p)=(0,0,0)$,
we have the assertion for
$H_2$.
\end{proof}

\section{Morin singularities from a manifold with tangent distribution}
Let $D_1$ be a rank $r$ tangent distribution on $M$,
and let $N$ be an $r$-dimensional manifold,
and $f:M\to N$ a map.
Setting $D_2=f^*TN$ and $\phi:D_1\to D_2$ 
by
$$\phi(v)=df(v),$$
we obtain a bundle homomorphism
between $D_1$ and $D_2$.
We call the above $\phi$ a {\it bundle homomorphism
induced by\/ $f$}.
In this section, assuming $f$ be a Morin singularity,
we consider relationships of
$\phi$, $D_1$ and $f$
in the case of $m=3$, $r=2$.
Moreover, we assume that
$M$ is an open neighborhood of $0$ in $\R^3$, 
$N$ is an open neighborhood of $0$ in $\R^2$,
and $f:(\R^3,0)\to(\R^2,0)$.
\subsection{Morin singularities}
We give a belief review on the Morin singularities
of
$(\R^3,0)\to(\R^2,0)$.
The map-germ $f:(\R^3,0)\to(\R^2,0)$ is called a
{\it definite fold\/} 
(respectively, a {\it indefinite fold\/}) if it is
${\cal A}$-equivalent to
the map-germ $(u,v,w)\mapsto(u,v^2+w^2)$ 
(respectively, $(u,v^2-w^2)$) at $0$.
Two map-germs $f,g:(\R^m,0)\to(\R^n,0)$ are
{\it ${\cal A}$-equivalent\/} if there exist
diffeomorphism-germs
$\sigma:(\R^m,0)\to(\R^m,0)$
and
$\tau:(\R^n,0)\to(\R^n,0)$
such that $\tau\circ f\circ\sigma^{-1}=g$.
The map-germ $f:(\R^3,0)\to(\R^2,0)$ is called a
{\it cusp\/}
if it is 
${\cal A}$-equivalent to
the map-germ $(u,v,w)\mapsto(u,v^2+w^3+uw)$.
Definite fold, indefinite fold and cusp are called
Morin singularities, and
it is known that generic 
singularities appearing on maps from a $3$-manifold
to a $2$-manifold are only
Morin singularities.
A characterization of Morin singularities is given
as follows:
Let $f:(\R^3,0)\to(\R^2,0)$ be a map-germ
and $\rank df_0=1$.
Then there exists a pair of vector fields 
$\{\xi, \eta_1,\eta_2\}$ such that
$$
\spann{\xi(0),\eta_1(0),\eta_2(0)}=T_0\R^3,\quad
\spann{\eta_1,\eta_2}=\ker df_p,\quad
p\in S(f),
$$
where $S(f)$ is the set of singular points of $f$.
We set 
$$
\lambda_1=\det(\xi f,\eta_1f),\quad
\lambda_2=\det(\xi f,\eta_2f),\quad
H=\pmt{
\eta_1\lambda_1&\eta_2\lambda_1\\
\eta_1\lambda_2&\eta_2\lambda_2}
$$
Then 
$f$ at $0$ is a definite fold (respectively, indefinite fold)
if and only if
$\det H(0)>0$ (respectively, $\det H(0)<0$).
We assume that $\rank H(0)=1$, then there exists
a vector field $\theta=a_1\eta_1+a_2\eta_2$ on $S(f)$
such that $\spann{\theta_0}=\ker H(0)$.
Then 
$f$ at $0$ is a cusp
if and only if
$\theta H(0)\ne0$.
See \cite{low} in detail.
\subsection{Conditions for singularities}
We take a frame $\{e_1,e_2\}$ of $D_1$.
We regard 
$e_1$, $e_2$ 
as vector fields.
We consider the conditions of singular points of
fold-like, cusp-like and swallowtail-like
singular points under the assumption
that $f$ is regular, fold and cusp
since these are generic singular points.

When $f$ is regular at $0$, and $D_1\not\subset \ker df_0$,
then $\phi$ is non-singular.
When $f$ is singular at $0$, and
$D_1\subset \ker df_0$,
then $\phi$ is of rank zero at $0$.
Since we are stick to rank one singular points of $\phi$,
we assume that $D_1\cap \ker df_0$ is one-dimensional.
By changing frame, we may assume that $e_1f(0)\ne0$.
The bundle homomorphism
$\phi$ can be represented by
the matrix
$$
(e_1f,e_2f)
$$
by $\{e_1,e_2\}$ and the trivial frame on $\R^2$.
Since $\rank \phi=1$ at $0$,
we take a null section $\eta_\phi$,
and set
$$
\lambda_\phi=\det(e_1f,e_2f)=\det(e_1f,\eta_\phi f).
$$
The following proposition holds.
\begin{proposition}\label{prop:hantei}
The singular point\/ $p$ of\/ $\phi$ is fold-like singular point
if and only if\/
$\det(e_1f,\eta_\phi^2f)\ne0$ at\/ $p$.
A non-degenerate singular point\/ $p$ is 
cusp-like singular point\/
$($respectively, swallowtail-like singular point\/$)$ 
if and only if\/
$\det(e_1f,\eta_\phi^2f)=0$, and\/
$\det(e_1f,\eta_\phi^3f)\ne0$ at\/ $p$
$($respectively, 
$\det(e_1f,\eta_\phi^2f)=\det(e_1f,\eta_\phi^3f)=0$,
$\det(e_1f,\eta_\phi^4f)\ne0$,
and\/
$d\det\big(\det(e_1f,\eta_\phi  f),
       \det(e_1f,\eta_\phi^2f),
       \det(e_1f,\eta_\phi^3f)\big)\ne0$ at\/ $p)$.
\end{proposition}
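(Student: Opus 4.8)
The plan is to express each defining quantity $\eta_\phi^{\,n}\lambda_\phi(p)$ directly in terms of the determinants $\det(e_1f,\eta_\phi^{\,k}f)(p)$ appearing in the statement, and to show that under the successive vanishing hypotheses all the unwanted cross terms drop out. Write $a=e_1f$ and recall $\lambda_\phi=\det(a,\eta_\phi f)$. Since the determinant is bilinear, the Leibniz rule gives, for every $n$,
$$
\eta_\phi^{\,n}\lambda_\phi=\sum_{k=0}^{n}\binom{n}{k}\det\bigl(\eta_\phi^{\,n-k}a,\ \eta_\phi^{\,k+1}f\bigr).
$$
The crucial point is that $\eta_\phi f$ vanishes on $S$: indeed $\eta_\phi$ generates $\ker(df_q|_{(D_1)_q})$ for $q\in S$, so $\eta_\phi f=df(\eta_\phi)=0$ along $S$. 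I would then upgrade this to higher derivatives by a stratification argument. At a non-degenerate point, $\eta_\phi\lambda_\phi(p)=0$ is equivalent to $(\eta_\phi)_p\in T_pS$ (because $T_pS=\ker d\lambda_\phi(p)$), and differentiating the relation $\eta_\phi f|_S=0$ along the tangent vector $(\eta_\phi)_p$ yields $\eta_\phi^2f(p)=0$. Likewise, on $S_2=\{\lambda_\phi=\eta_\phi\lambda_\phi=0\}$ one has $\eta_\phi^2f=0$, and $\eta_\phi^2\lambda_\phi(p)=0$ forces $(\eta_\phi)_p\in T_pS_2$, whence $\eta_\phi^3f(p)=0$.

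Feeding this into the Leibniz expansion, whenever $\eta_\phi f(p)=\dots=\eta_\phi^{\,n}f(p)=0$ only the $k=n$ term survives, so
$$
\eta_\phi^{\,n}\lambda_\phi(p)=\det\bigl(e_1f,\eta_\phi^{\,n+1}f\bigr)(p).
$$
For $n=1$ this is unconditional and gives the fold-like case: $p$ is fold-like if and only if $\eta_\phi\lambda_\phi(p)=\det(e_1f,\eta_\phi^2f)(p)\neq0$. For the cusp-like case, $\det(e_1f,\eta_\phi^2f)(p)=0$ is equivalent, at a non-degenerate point, to $\eta_\phi\lambda_\phi(p)=0$ and hence to $\eta_\phi^2f(p)=0$; the identity with $n=2$ then reads $\eta_\phi^2\lambda_\phi(p)=\det(e_1f,\eta_\phi^3f)(p)$, giving the stated equivalence. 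The same mechanism with $n=3$ produces $\eta_\phi^3\lambda_\phi(p)=\det(e_1f,\eta_\phi^4f)(p)$ and so handles the non-vanishing clause of the swallowtail-like case.

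It remains to match the two rank-$3$ conditions, and here the argument is most delicate because one must compare differentials, not merely values. Writing $Q_0=\lambda_\phi=\det(e_1f,\eta_\phi f)$, $Q_1=\det(e_1f,\eta_\phi^2f)$, $Q_2=\det(e_1f,\eta_\phi^3f)$ and $P_j=\eta_\phi^{\,j}\lambda_\phi$, the Leibniz expansion shows $P_0=Q_0$ while $P_1-Q_1$ and $P_2-Q_2$ are explicit sums of determinants that vanish on $S$, respectively on $S_2$. I would then use that a function vanishing on the submanifold $S=\{\lambda_\phi=0\}$, respectively $S_2=\{\lambda_\phi=\eta_\phi\lambda_\phi=0\}$, which are transverse intersections under the non-degeneracy and rank hypotheses, has differential at $p$ lying in $\spann{d\lambda_\phi(p)}$, respectively in $\spann{d\lambda_\phi(p),d(\eta_\phi\lambda_\phi)(p)}$. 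This makes the passage from $(dP_0,dP_1,dP_2)(p)$ to $(dQ_0,dQ_1,dQ_2)(p)$ a unipotent (triangular) row operation, which preserves rank; thus $\rank d(\lambda_\phi,\eta_\phi\lambda_\phi,\eta_\phi^2\lambda_\phi)(p)=3$ is equivalent to $\rank d(Q_0,Q_1,Q_2)(p)=3$, completing the swallowtail-like equivalence.

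The main obstacle I anticipate is precisely this last step: the vanishing of the cross terms at the level of values is clean, but to identify the two rank conditions I must control the differentials of the error terms $P_j-Q_j$, which requires knowing they vanish along the correct stratum and invoking that $\lambda_\phi$ and $\eta_\phi\lambda_\phi$ cut out $S$ and $S_2$ transversally near $p$. The tangency statements $(\eta_\phi)_p\in T_pS$ and $(\eta_\phi)_p\in T_pS_2$, which are what force $\eta_\phi^2f(p)=0$ and $\eta_\phi^3f(p)=0$, must be established carefully from the non-degeneracy and rank hypotheses, since the entire reduction rests on them.
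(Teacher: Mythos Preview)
Your argument is correct and follows the same outline as the paper's proof: both reduce $\eta_\phi^{\,n}\lambda_\phi(p)$ to $\det(e_1f,\eta_\phi^{\,n+1}f)(p)$ via the Leibniz rule after establishing $\eta_\phi f(p)=\cdots=\eta_\phi^{\,n}f(p)=0$. The only difference is in how these vanishings are obtained. The paper uses the Hadamard-type factorization $\eta_\phi f=\lambda_\phi\,g$ (available because $\eta_\phi f$ vanishes on $S=\{\lambda_\phi=0\}$ and $d\lambda_\phi(p)\neq0$); differentiating this identity, each $\eta_\phi^{\,k+1}f$ becomes a combination of $\lambda_\phi,\eta_\phi\lambda_\phi,\ldots,\eta_\phi^{\,k}\lambda_\phi$ with smooth coefficients, so it vanishes at $p$ directly, with no need to know that $S_2$ is a submanifold. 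Your tangency-to-strata mechanism is the geometric counterpart of this and is equally valid, though it forces you to check separately that $d\lambda_\phi(p)$ and $d(\eta_\phi\lambda_\phi)(p)$ are independent before differentiating along $S_2$. On the rank-$3$ clause, your unipotent row-operation argument actually supplies a step the paper leaves implicit: the paper simply asserts the equivalence of the two rank conditions once $\eta_\phi^2f(p)=\eta_\phi^3f(p)=0$ are in hand.
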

\begin{proof}
Since $\eta_\phi f(p)=0$,
it is obvious that the assertion for the fold-like singular point.
Let $p$ be a non-degenerate singular point,
and $\eta_\phi\lambda(p)=0$.
Since $\eta_\phi f=0$ on $S=\{\lambda_\phi=0\}$,
and $p$ is non-degenerate,
there exists a vector valued function $g$ such that
$\eta_\phi f=\lambda_\phi g$.
Then by the assumption $\eta_\phi\lambda(p)=0$,
$\eta_\phi^2 f(p)=0$.
Hence $\eta_\phi^2\lambda_\phi(p)\ne0$
is equivalent to $\det(e_1f,\eta_\phi^3f)(p)\ne0$.
This proves the assertion for the cusp-like singular point.
Next we assume that
$p$ be a non-degenerate singular point,
and $\eta_\phi\lambda(p)=\eta_\phi^2\lambda(p)=0$.
Then by the same reason as above, 
we have $\eta_\phi^2 f(p)=\eta_\phi^3 f(p)=0$.
Thus we see that
$\eta_\phi^3\lambda_\phi(p)\ne0$
is equivalent to $\det(e_1f,\eta_\phi^4f)(p)\ne0$,
and
$\det(\lambda_\phi,\eta_\phi\lambda_\phi,\eta^2_\phi
\lambda_\phi)(p)\ne0$
is equivalent to
$d\det\big(\det(e_1f,\eta_\phi  f),
       \det(e_1f,\eta_\phi^2f),
       \det(e_1f,\eta_\phi^3f)\big)(p)\ne0$.
This proves the assertion.
\end{proof}
\subsection{Restriction of singularities of $\phi$ by singular types of $f$}
We assume that $f$ at $0$ is a definite fold singular point.
Then
$\rank (e_1f,e_2f,e_3f)=1$ on $S(f)$,
where $\{e_1,e_2,e_3\}$ is a frame of $T\R^3$.
Thus
there exist functions $k_1,k_2$ such that
$e_2f=k_1e_1f,e_3f=k_2e_1f$ on $S(f)$.
Taking extensions
of
$k_1,k_2$ 
on $\R^3$, we set
$$
\eta_2=-k_1e_1+e_2,\quad
\eta_3=-k_2e_1+e_3,
$$
and also set
$$
\lambda_2=\det(e_1f,e_2f)=\det(e_1f,\eta_2f),
\quad\lambda_3=\det(e_1f,e_3f)=\det(e_1f,\eta_3f).
$$
Then we see that
$\eta_2$ is a null section of $\phi$,
and $\lambda_2$ is the same as
$\lambda_\phi$.
Since 
$f$ is definite fold,
$$
H=\det\pmt{
\eta_2\lambda_2&\eta_3\lambda_2\\
\eta_2\lambda_3&\eta_3\lambda_3}
>0.
$$
In particular, 
$\eta_2\lambda_2\ne0$.
Thus 
$\phi$ is fold-like at $0$ if $\rank \phi(0)=1$.

Next
we assume that $f$ at $0$ is a cusp singular point.
Then
we take $k_1,k_2,\eta_2,\eta_3$ and $\lambda_2,\lambda_3$
as above.
We assume that $\phi$ is not fold-like,
namely, $\eta_2\lambda_2(0)=0$.
Then since
$f$ is cusp,
$$
H(0)=\det\pmt{
\eta_2\lambda_2&\eta_3\lambda_2\\
\eta_2\lambda_3&\eta_3\lambda_3}(0)
=0.
$$
Since $\eta_3\lambda_2(0)=\eta_2\lambda_3(0)$,
it holds that $\eta_3\lambda_2(0)=0$.
Hence the kernel of $H$ is $\theta=\eta_1$ at $0$.
Then $f$ is cusp if and only if 
$$
\eta_1^2\lambda_1(0)\ \eta_2\lambda_2(0)\ne0.
$$
Thus 
$\phi$ is non-degenerate and not fold-like at $0$,
then $\phi$ is cusp-like at $0$.

\subsection{The case $D_1$ is a foliation}
In this section, we assume $D_1$ is a foliation.
By taking a coordinate system $(x,y,z)$ on $\R^3$,
we may assume $D_1=\spann{e_1,e_2}=\spann{\partial_x,\partial_y}$.
Let $L(x,y)$ be the leaf which contains the origin,
namely, $L(x,y)=f(x,y,0)$.
We have the following proposition.
\begin{proposition}\label{prop:folsing}
Under the above setting, the following holds\/{\rm :}
{\rm (1)} 
$\phi$ is fold-like if and only if\/ $L$ is fold.
{\rm (2)} if\/ $\phi$ is non-degenerate, then\/
$\phi$ is cusp-like if and only if\/ $L$ is cusp.
{\rm (3)} if\/ $\phi$ satisfies that\/
$\rank d(\lambda_\phi,\eta_\phi\lambda_\phi)(0)=2$,
then\/
$\phi$ is swallowtail-like if and only if\/ $L$ is swallowtail.
\end{proposition}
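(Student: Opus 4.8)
The plan is to push every condition on $\phi$ at $0$ down to the leaf $\{z=0\}$ and read it off from $L$. In the frame $\{e_1,e_2\}=\{\partial_x,\partial_y\}$ of $D_1$ and the trivial frame of $\R^2$, $\phi$ is the matrix $(\partial_x f,\partial_y f)$, whose restriction to $\{z=0\}$ is the Jacobi matrix $dL=(L_x,L_y)$; hence $\lambda_\phi|_{z=0}=\det dL=:\lambda_L$. As $\eta_\phi\in D_1=\langle\partial_x,\partial_y\rangle$ has no $\partial_z$-component, its restriction $\eta_L:=\eta_\phi|_{z=0}$ is tangent to the leaf, and $\phi(\eta_\phi)=0$ on $S$ restricts to $dL(\eta_L)=0$ on $S\cap\{z=0\}$, so $\eta_L$ is a null vector field of $L$. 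Since $\partial_x,\partial_y$ commute with restriction to $\{z=0\}$, so does $\eta_\phi$, and by induction
$$
(\eta_\phi^{\,k}\lambda_\phi)\big|_{z=0}=\eta_L^{\,k}\lambda_L,\qquad
(\partial_x\eta_\phi^{\,k}\lambda_\phi)\big|_{z=0}=\partial_x(\eta_L^{\,k}\lambda_L).
$$
This dictionary is the backbone of the argument.

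Items (1) and (2) then fall out directly. Because $e_1f(0)=L_x(0)\ne0$, the matrix $(\partial_x f,\partial_y f)(0)=dL(0)$ has rank exactly one, so $\phi$ is corank one at $0$ precisely when $L$ is singular of rank one there. The fold-like condition $\eta_\phi\lambda_\phi(0)\ne0$ becomes $\eta_L\lambda_L(0)\ne0$, the fold criterion for $L$, giving (1). Under non-degeneracy, the cusp-like conditions $\eta_\phi\lambda_\phi(0)=0$ and $\eta_\phi^2\lambda_\phi(0)\ne0$ become $\eta_L\lambda_L(0)=0$ and $\eta_L^2\lambda_L(0)\ne0$, the cusp criterion for $L$, giving (2).

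For (3) the swallowtail-like test carries the three-dimensional rank condition $\rank d(\lambda_\phi,\eta_\phi\lambda_\phi,\eta_\phi^2\lambda_\phi)(0)=3$, while the swallowtail of $L$ is controlled by $\eta_L^{3}\lambda_L(0)\ne0$; the hypothesis $\rank d(\lambda_\phi,\eta_\phi\lambda_\phi)(0)=2$ is what links them. Normalising $\eta_\phi=a\partial_x+\partial_y$ (legitimate since $L_x(0)\ne0$ makes the kernel direction have nonzero $\partial_y$-part), the vanishing of $\eta_\phi\lambda_\phi$ and $\eta_\phi^2\lambda_\phi$ at $0$ yields
$$
\partial_y\lambda_\phi(0)=-a(0)\,\partial_x\lambda_\phi(0),\qquad
\partial_y(\eta_\phi\lambda_\phi)(0)=-a(0)\,\partial_x(\eta_\phi\lambda_\phi)(0),
$$
so in $d(\lambda_\phi,\eta_\phi\lambda_\phi)(0)$ the $\partial_y$-column is $-a(0)$ times the $\partial_x$-column. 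Consequently the hypothesis is equivalent to $\Delta\ne0$, where
$$
\Delta=\det\pmt{\partial_x\lambda_\phi&\partial_z\lambda_\phi\\
\partial_x(\eta_\phi\lambda_\phi)&\partial_z(\eta_\phi\lambda_\phi)}(0).
$$
Adding $a(0)$ times the $\partial_x$-column to the $\partial_y$-column of the $3\times3$ matrix $d(\lambda_\phi,\eta_\phi\lambda_\phi,\eta_\phi^2\lambda_\phi)(0)$ and evaluating the new $\partial_y$-entries by $\eta_\phi(\eta_\phi^2\lambda_\phi)=\eta_\phi^3\lambda_\phi$, its determinant factors as $-\eta_\phi^3\lambda_\phi(0)\,\Delta$. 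Thus, once $\Delta\ne0$, the rank-three condition is equivalent to $\eta_\phi^3\lambda_\phi(0)\ne0$, and $\Delta\ne0$ already forces $d\lambda_\phi(0)\ne0$. Through the dictionary this says exactly $\eta_L\lambda_L(0)=\eta_L^2\lambda_L(0)=0$ and $\eta_L^3\lambda_L(0)\ne0$, i.e. $L$ is a swallowtail, proving (3).

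I expect the main obstacle to be precisely this last reconciliation. The swallowtail-like test for $\phi$ lives in the three-dimensional source and genuinely sees the transverse direction $\partial_z$ through the minor $\Delta$, whereas the test for $L$ is two-dimensional; the role of the hypothesis $\rank d(\lambda_\phi,\eta_\phi\lambda_\phi)(0)=2$ is to identify $\Delta\ne0$ and thereby collapse the rank-three condition onto the single scalar condition $\eta_\phi^3\lambda_\phi(0)\ne0$. The care needed lies in the determinant factorisation and in checking that the column dependence, the normalisation of $\eta_\phi$, and the whole computation are independent of the extension of $\eta_\phi$ off $S$, which is guaranteed by the earlier independence lemma.
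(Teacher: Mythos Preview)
Your approach is the same as the paper's: restrict everything to the leaf $\{z=0\}$ and use the dictionary $\lambda_\phi|_{z=0}=\lambda_L$, $\eta_\phi|_{z=0}=\eta_L$, hence $\eta_\phi^{\,k}\lambda_\phi(0)=\eta_L^{\,k}\lambda_L(0)$, then invoke the plane--to--plane criteria. The paper's proof simply records this dictionary for $k=1,2,3$ and declares the rest ``obvious by Fact~\ref{fact:plpl}''; in particular it does \emph{not} spell out how the three-dimensional rank condition $\rank d(\lambda_\phi,\eta_\phi\lambda_\phi,\eta_\phi^2\lambda_\phi)(0)=3$ collapses to the scalar condition $\eta_\phi^3\lambda_\phi(0)\ne0$. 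Your column manipulation giving $\det=-\eta_\phi^3\lambda_\phi(0)\,\Delta$ makes this explicit and is a genuine addition over the paper's write-up.

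One point neither you nor the paper addresses: in the forward direction of (3), concluding that $L$ is a swallowtail via Fact~\ref{fact:plpl} requires $d\lambda_L(0)\ne0$, i.e.\ $(\partial_x\lambda_\phi,\partial_y\lambda_\phi)(0)\ne(0,0)$. Your argument only yields $d\lambda_\phi(0)\ne0$ from $\Delta\ne0$, which could come entirely from the $\partial_z$-direction. (Concretely, for $f(x,y,z)=(x,\,y^4+yz+xy^2)$ one checks that $\phi$ is swallowtail-like and the rank-$2$ hypothesis holds, yet $L(x,y)=(x,\,y^4+xy^2)$ has $d\lambda_L(0)=0$ and a reducible singular set, so $L$ is not a swallowtail.) This is a gap in the statement/proof that the paper shares; your argument is otherwise at least as complete as the paper's.
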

A map-germ $f:(\R^2,0)\to(\R^2,0)$ 
is called a {\it fold\/} if $f$ is ${\cal A}$-equivalent to 
the map-germ $(u,v)\mapsto(u,v^2)$ at $0$.
Furthermore, a map-germ $f:(\R^2,0)\to(\R^2,0)$ 
is called a {\it cusp\/} (respectively, {\it swallowtail\/})
if $f$ is ${\cal A}$-equivalent to 
$(u,v)\mapsto(u,v^3+uv)$ at $0$
(respectively, $(u,v)\mapsto(u,v^4+uv)$ at $0$).
Criteria for these singularities are obtained as follows:
Let $f:(\R^2,0)\to(\R^2,0)$ be a map-germ.
We set $\lambda=\det J$, where $J$ is the Jacobian matrix of $f$.
A singular point $p\in S(f)$ is {\it non-degenerate\/} if 
$d\lambda(p)\ne0$.
Then the following holds.
\begin{fact}\label{fact:plpl}{\rm \cite{w,suy3,cripl}}
A singular point\/ $p$ is fold
if\/ $\eta\lambda(p)\ne0$.
Moreover, a non-degenerate singular point\/ $p$ is cusp\/
$($respectively, swallowtail\/$)$
if\/ $\eta\lambda(p)=0$ and\/ $\eta^2\lambda(p)\ne0$
$($respectively, 
$\eta\lambda(p)=\eta^2\lambda(p)=0$ and\/ $\eta^3\lambda(p)\ne0)$.
\end{fact}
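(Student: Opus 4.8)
The plan is to reduce $f$ to each normal form by explicit coordinate changes, after translating the hypotheses on $\eta^k\lambda$ into the order of vanishing of $f$ in the null direction. Since $p$ is a rank-one singular point, I would first choose, after a linear change in the target and a change of coordinates $(u,v)$ in the source, a form $f(u,v)=(u,g(u,v))$ with $g_v(0,0)=0$. Then $J$ has determinant $\lambda=g_v$, the kernel of $df$ at a singular point is spanned by $\partial_v$, so I may take $\eta=\partial_v$; only the value of $\eta$ on $S(f)$ enters the conditions, so by the same kind of invariance argument as in the null-section lemmas above this entails no loss of generality. With this choice $\eta^k\lambda=\partial_v^{\,k+1}g$, so the three hypotheses simply prescribe the vanishing order of $v\mapsto g_v(0,v)$ at the origin. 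Note that $\eta\lambda(0)\ne0$ already forces $d\lambda(0)\ne0$, so the fold point is automatically non-degenerate, whereas in the cusp and swallowtail cases non-degeneracy together with $\eta\lambda(0)=g_{vv}(0)=0$ forces $g_{uv}(0)\ne0$.

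For the fold I would use $g_{vv}(0)\ne0$ to move the critical locus of $v\mapsto g(u,v)$ to $\{v=0\}$ by the implicit function theorem, so that $g_v(u,0)\equiv0$, then apply Hadamard's lemma twice to write $g=g(u,0)+v^2h(u,v)$ with $h(0,0)\ne0$. The target diffeomorphism $(X,Y)\mapsto(X,Y-g(X,0))$ removes $g(u,0)$, and the source rescaling $v\mapsto v\sqrt{\lvert h\rvert}$ (a fibered diffeomorphism since $\sqrt{\lvert h(0,0)\rvert}\ne0$) brings $f$ to $(u,v^2)$, the sign of $h$ being absorbed into the target. This yields the fold.

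For the cusp the key observation is that $f=(u,g)$ is the one-parameter unfolding, with parameter $u$, of the function-germ $v\mapsto g(0,v)$, which is an $A_2$-germ ($\sim v^3$) because $g_{vv}(0)=0$ and $g_{vvv}(0)\ne0$. The condition $g_{uv}(0)\ne0$ is precisely the infinitesimal versality (transversality) condition for this unfolding, so the unfolding is versal and hence right-equivalent to the miniversal unfolding $v^3+uv$; translating right-equivalence of unfoldings into $\A$-equivalence of the associated maps gives $f\sim(u,v^3+uv)$. I expect this step to rest on the Malgrange preparation theorem, which is the engine behind the versal-unfolding classification.

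The swallowtail case is the main obstacle. Here $v\mapsto g(0,v)$ is an $A_3$-germ ($\sim v^4$) by $g_{vv}(0)=g_{vvv}(0)=0$ and $g_{vvvv}(0)\ne0$, but the miniversal unfolding of $A_3$ requires two parameters, so the one-parameter unfolding $f$ cannot be versal and the clean cusp argument breaks down. Instead I would argue directly: using $g_{uv}(0)\ne0$ to solve $g_v=0$ as $u=\psi(v)$ (necessarily with $\psi'(0)=0$, reflecting the tangency of $S(f)$ to the null direction encoded by $\eta\lambda(0)=0$), and then producing the $\A$-equivalence to $(u,v^4+uv)$ by a preparation/division argument in the fiber variable $v$. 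The delicate point is that straightening the singular set conflicts with preserving the submersive first coordinate $u$, so the remaining coordinate freedom must be spent exactly to place the single non-versal direction at the coefficient of the linear term $uv$; this is where the detailed computation of \cite{cripl,suy3} is needed, and I would follow their $A_3$ recognition scheme rather than reprove it from scratch.
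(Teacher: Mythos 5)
The paper itself offers no proof of this statement: it is imported verbatim as a Fact with citations to \cite{w,suy3,cripl}, so your attempt can only be measured against those sources, not against an internal argument. Your reduction to the prenormal form $f=(u,g(u,v))$ with $\eta=\partial_v$ and $\eta^k\lambda=\partial_v^{k+1}g$ is the standard starting point and is carried out correctly, including the observations that $\eta\lambda(0)\neq0$ already gives non-degeneracy and that non-degeneracy together with $g_{vv}(0)=0$ forces $g_{uv}(0)\neq0$; the appeal to the invariance argument (as in Lemma 2.3 of this paper) to justify the choice $\eta=\partial_v$ is legitimate. The fold case (implicit function theorem, Hadamard's lemma twice, fibered rescaling, target shear) is Whitney's argument and is complete. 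The cusp case via versality is also correct: an isomorphism of one-parameter unfoldings $g(u,v)=G(\phi(u),\psi(u,v))+c(u)$ does induce an $\mathcal{A}$-equivalence of $(u,g)$ and $(u,G)$; the $\mathcal{R}^+$-miniversal unfolding of the $A_2$-germ $v^3$ is the one-parameter family $v^3+uv$; and infinitesimal $\mathcal{R}^+$-versality of your unfolding is exactly the condition $g_{uv}(0)\neq0$.

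The swallowtail case, however, is a genuine gap, and you concede as much. You correctly diagnose why the versality mechanism must fail (the $A_3$-germ $v^4$ has the two-parameter $\mathcal{R}^+$-miniversal unfolding $v^4+av^2+bv$, so no one-parameter unfolding is versal), but what follows is an outline terminating in a deferral to \cite{cripl,suy3} --- precisely the references from which the Fact is quoted, so for this third of the statement the proposal is circular rather than a proof. What is missing is the actual $\mathcal{A}$-recognition of $(u,v^4+uv)$: one must either prove a determinacy statement and normalize the relevant jet within the full group $\mathcal{A}$ (the swallowtail germ has $\mathcal{A}_e$-codimension one, and the tangent-space computation genuinely differs from the unfolding computation, since target diffeomorphisms mixing the two target coordinates --- forbidden in unfolding equivalence --- are what absorb the non-versal direction), or construct the source and target diffeomorphisms explicitly as is done in \cite{cripl}. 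Your remark that straightening $S(f)$ conflicts with preserving the submersive first component identifies the right difficulty, but identifying a difficulty is not resolving it. As submitted, the proposal proves the fold and cusp parts and leaves the hardest part of the Fact as a citation; that is adequate if the goal is merely to source a quoted Fact, but it is not a complete standalone proof.
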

\begin{proof}[Proof of Proposition\/ {\rm \ref{prop:folsing}}]
We may assume that $f_x(0,0)\ne0$.
Then there exists a function $k_1(x,y,z)$ such that
if  $p\in S$, then
$$
f_y(p)=k_1(p)f_x(p).
$$
Take an extension of $k_1$ on $U$,
we take a null section 
$$
\eta_\phi=-k_1e_1+e_2.
$$
On the other hand,
there exists a function $l(x,y)$ such that
if  $q\in S(L)$, then
$$
f_y(q)=l(q)f_x(q).
$$
Take an extension of $l$ on $U\cap \{z=0\}$,
we take a null vector field of $L$
$$
\eta_L=-le_1+e_2.
$$
Set $\lambda_L(x,y)=\det(e_1f(x,y,0),\eta_Lf(x,y,0))$.
Then 
since $\lambda_\phi(x,y,0)=\lambda_L(x,y)$,
and 
$\eta_\phi(x,y,0)=\eta_L(x,y)$,
we see
\begin{equation}\label{eq:lambdafol}
\eta_\phi^i\lambda(0)=\eta_L^i\lambda_L(0)\quad
(i=1,2,3).
\end{equation}
The assertion is obvious by
Fact \ref{fact:plpl} and \eqref{eq:lambdafol}.
\end{proof}
\subsection{The case $D_1$ is a contact structure}
In this section, we assume $D_1$ is a contact structure.
Since the Hamilton vector field $X$ of $\lambda_\phi$
is contained in $D_1$
on $S$,
we consider the relationship with the behavior of $X$
and the singularities of $\phi$.
We may assume 
$D_1=\spann{e_1,e_2}=\spann{\partial_x,\partial_y-x\partial_z}$
without loss of generality.
Since $\phi$ can be expressed by
$(f_x,f_y-x f_z)$,
$$
\lambda_\phi
=
\det(f_x,f_y-x f_z).
$$
The Hamilton vector field
$X$
of $\lambda_\phi$ is
$$
X=
(\lambda_y-x\lambda_z)\partial_x
-\lambda_x\partial_y
-(\lambda-x\lambda_x)\partial_z
=(\lambda_y-x\lambda_z)e_1-\lambda_xe_2-\lambda\partial_z
.
$$
Since $S=\{\lambda_\phi=0\}$ holds,
$X_p\in D_1$ is equivalent to $p\in S$.
We have the following theorem.
\begin{theorem}\label{thm:conthamil}
If\/ $\phi$ has a corank one singular point at\/ $p$,
under the above setting,
$p\in S$ is fold-like if and only if
$$
X_p\quad\text{and}\quad(\eta_\phi)_p
$$
are linearly independent,
where\/ $\eta_\phi$ is a null section of\/ $\phi$.
\end{theorem}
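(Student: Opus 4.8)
The plan is to reduce the geometric statement about $X_p$ and $(\eta_\phi)_p$ to the analytic condition $\eta_\phi\lambda_\phi(p)\ne0$ that defines a fold-like singular point. Since the theorem already assumes that $p$ has corank one, by the definition of a fold-like singular point it suffices to prove that $X_p$ and $(\eta_\phi)_p$ are linearly independent if and only if $\eta_\phi\lambda_\phi(p)\ne0$.

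The first step is to restrict $X$ to $S$. From the given expression
$$
X=(\lambda_y-x\lambda_z)e_1-\lambda_x e_2-\lambda_\phi\partial_z,
$$
and $S=\{\lambda_\phi=0\}$, the transverse term $-\lambda_\phi\partial_z$ vanishes along $S$, so
$$
X|_S=(\lambda_y-x\lambda_z)e_1-\lambda_x e_2\in D_1=\spann{e_1,e_2}.
$$
Hence at $p\in S$ both $X_p$ and the null section $(\eta_\phi)_p$ lie in the two-dimensional space $(D_1)_p$, and their linear independence is equivalent to the non-vanishing of the determinant of their coordinate columns in the frame $\{e_1,e_2\}$. Writing $\eta_\phi=\alpha e_1+\beta e_2$ for functions $\alpha,\beta$, this determinant is
$$
\det\pmt{\lambda_y-x\lambda_z&\alpha\\[1mm]-\lambda_x&\beta}
=\alpha\,\lambda_x+\beta\,(\lambda_y-x\lambda_z).
$$
On the other hand, since $e_1\lambda_\phi=\lambda_x$ and $e_2\lambda_\phi=\lambda_y-x\lambda_z$, the directional derivative is $\eta_\phi\lambda_\phi=\alpha\,\lambda_x+\beta\,(\lambda_y-x\lambda_z)$, which is the same expression. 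Therefore the determinant equals $\eta_\phi\lambda_\phi$ identically along $S$, and the equivalence, and hence the theorem, follows.

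I do not expect a genuine obstacle here: the whole content is the observation that restricting the Hamilton vector field to $S$ removes its $\partial_z$-component and leaves exactly the rotation of $d\lambda_\phi$ inside $D_1$, whose pairing against the null section reproduces $\eta_\phi\lambda_\phi$. The only point worth a remark is that this identity holds as functions on $S$, not merely at the single point $p$; in particular the result is automatically consistent with the earlier lemma asserting that the fold-like condition is independent of the choice of null section, since replacing $\eta_\phi$ by $a\eta_\phi+b$ with $b=0$ on $S$ rescales both the determinant and $\eta_\phi\lambda_\phi$ by the same factor $a$ at $p$.
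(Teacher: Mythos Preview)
Your proof is correct and follows essentially the same approach as the paper: both restrict $X$ to $S$ so that it lies in $D_1$, write $\eta_\phi=\alpha e_1+\beta e_2$, and observe that the $2\times 2$ determinant of the coordinate columns of $X_p$ and $(\eta_\phi)_p$ equals $\alpha\lambda_x+\beta(\lambda_y-x\lambda_z)=\eta_\phi\lambda_\phi$. Your write-up is somewhat more explicit about why both vectors lie in $(D_1)_p$ and about the reduction to the determinant test, but the computation and the idea are identical.
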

\begin{proof}
Since $\phi$ is a corank one singular point at $p$,
there exist functions $k_1,k_2$ on $S$
such that
$(k_1,k_2)\ne(0,0)$ and
$k_1e_1f+k_2e_2f=0$.
Expanding $k_1,k_2$ to a neighborhood of $p$, 
we can take a null section 
$\eta_\phi=k_1e_1+k_2e_2$.
Then 
$$
\eta_\phi\lambda_\phi
=
k_1e_1\lambda_\phi+k_2e_2\lambda_\phi
=
k_1\lambda_x+k_2(\lambda_y-x\lambda_z)
=\det\pmt{
k_1&-(\lambda_y-x\lambda_z)\\
k_2&\lambda_x}
$$
shows the assertion.
\end{proof}
By Theorem \ref{thm:conthamil},
on the set of non-fold-like singular points,
$X$ is parallel to the null vector field,
by Propositions \ref{prop:foldcugeom} and \ref{prop:swgeom},
we have the following corollary.
\begin{corollary}
If $p\in S$ is a cusp-like singular point,
then $X_p\not\in T_pS_2$.
If $p\in S$ is a swallowtail-like singular point.
Then $$
\tilde\mu(0)=0,\quad\tilde\mu'(0)\ne0,
$$
where 
$\gamma(t)=(\gamma_1(t),\gamma_2(t))$ $(\gamma(0)=p)$ 
is a parameterization
of $S_2$,
and $\eta_{\gamma(t)}=a(t)\partial_u+b(t)\partial_v$,
and
$$
\tilde\mu(t)=\pmt{
\gamma_1(t)&a(t)\\
\gamma_2(t)&b(t)}.
$$
\end{corollary}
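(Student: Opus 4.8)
The plan is to regard this Corollary as the Hamiltonian reformulation of Propositions \ref{prop:foldcugeom} and \ref{prop:swgeom}, which are phrased in terms of the null section $\eta_\phi$, and to transport those conclusions to $X$ using that $X$ and $\eta_\phi$ are proportional along $S_2$. First I would note that a cusp-like or swallowtail-like singular point $p$ is non-degenerate and satisfies $\eta_\phi\lambda_\phi(p)=0$, so it is not fold-like and Theorem \ref{thm:conthamil} gives that $X_p$ and $(\eta_\phi)_p$ are linearly dependent. Since $S_2=\{\lambda_\phi=\eta_\phi\lambda_\phi=0\}$, the same holds at every point of $S_2$, and as $\eta_\phi$ is nowhere zero I would record this as $X=c\,\eta_\phi$ on $S_2$ for a smooth scalar function $c$. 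I would also record that $X$ is tangent to $S$ along $S$: from the displayed formula one computes $X\lambda_\phi=-\lambda_\phi\,\lambda_z$, which vanishes on $S$, so $X|_S$ has well-defined components in a coordinate system $(u,v)$ of $S$.

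For the cusp-like assertion, Proposition \ref{prop:foldcugeom} gives $(\eta_\phi)_p\notin T_pS_2$. Because $X_p=c(p)(\eta_\phi)_p$, we have $X_p\in T_pS_2$ if and only if $c(p)=0$, i.e. if and only if $X_p=0$; hence the claim $X_p\notin T_pS_2$ is equivalent to $X_p\neq 0$. To settle this I would use that on $S$ the formula reduces to $X|_S=(e_2\lambda_\phi)\,e_1-(e_1\lambda_\phi)\,e_2$, so $X_p\neq 0$ is exactly the transverse-type condition $(e_1\lambda_\phi,e_2\lambda_\phi)(p)\neq(0,0)$; under it $c(p)\neq 0$ and therefore $X_p\notin T_pS_2$.

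For the swallowtail-like assertion I would restrict $X=c\,\eta_\phi$ to the parametrisation $\gamma$ of $S_2$. Writing $X_{\gamma(t)}=a(t)\partial_u+b(t)\partial_v$ and the components of $\eta_{\gamma(t)}$ as $(a_\eta(t),b_\eta(t))$, proportionality gives $(a,b)=c(t)\,(a_\eta,b_\eta)$, and pulling $c(t)$ out of the determinant yields $\tilde\mu(t)=c(t)\,\mu(t)$ with $\mu$ the matrix of Proposition \ref{prop:swgeom}. Differentiating and using $\mu(0)=0$, $\mu'(0)\neq 0$ from that proposition gives $\tilde\mu(0)=0$ and $\tilde\mu'(0)=c(0)\,\mu'(0)$, which is nonzero precisely when $c(0)\neq 0$, i.e. again in the transverse case.

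The only genuinely non-formal step, and the one I expect to be the main obstacle, is the nonvanishing $c(p)\neq 0$, equivalently $X_p\neq 0$. The computation above identifies it with the transverse-type condition $(e_1\lambda_\phi,e_2\lambda_\phi)(p)\neq(0,0)$; at a tangent-type point one has $X_p=0$, the proportionality constant degenerates, and the clean conclusions $X_p\notin T_pS_2$ and $\tilde\mu'(0)\neq 0$ must be read in the transverse case. Once this nonvanishing is in hand, the remainder is the formal substitution $X=c\,\eta_\phi$ into the already-established Propositions \ref{prop:foldcugeom} and \ref{prop:swgeom}.
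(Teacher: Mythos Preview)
Your approach is exactly the paper's: the paper's entire argument is the one sentence preceding the corollary, namely that Theorem~\ref{thm:conthamil} forces $X$ to be parallel to $\eta_\phi$ along the non-fold-like locus, and then Propositions~\ref{prop:foldcugeom} and~\ref{prop:swgeom} apply verbatim. Your derivation of $X=c\,\eta_\phi$ on $S_2$, the check $X\lambda_\phi=-\lambda_\phi\lambda_z$ showing $X$ is tangent to $S$, and the identity $\tilde\mu(t)=c(t)\mu(t)$ are the explicit content of that sentence.

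Your caveat about the tangent-type case is not a defect of your argument but a genuine gap in the corollary as stated. On $S$ one has $X=(e_2\lambda_\phi)e_1-(e_1\lambda_\phi)e_2$, so at a tangent-type cusp-like or swallowtail-like point $p$ (where $e_1\lambda_\phi(p)=e_2\lambda_\phi(p)=0$) the vector $X_p$ vanishes; then $X_p=0\in T_pS_2$ and $\tilde\mu'(0)=c(0)\mu'(0)=0$, so both conclusions of the corollary fail. The paper does not address this, and the corollary should be read under the transverse-type hypothesis, exactly as you note. There is nothing further to supply: once $c(p)\neq 0$ is assumed, the substitution $X=c\,\eta_\phi$ into Propositions~\ref{prop:foldcugeom} and~\ref{prop:swgeom} is the complete proof.
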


\medskip

{\small
\begin{flushright}
\begin{tabular}{l}
\begin{tabular}{l}
(Saji)\\
Department of Mathematics,\\
Graduate School of Science, \\
Kobe University, \\
Rokkodai 1-1, Nada, Kobe 657-8501, Japan\\
  E-mail: {\tt sajiO\!\!\!amath.kobe-u.ac.jp}
\end{tabular}
\\
\\
\begin{tabular}{l}
(Tsuchida)\\
Department of Mathematics, \\
Hokkaido University,\\
Sapporo 060-0810, Japan \hspace*{26mm}\\
{\tt asahi-tO\!\!\!amath.sci.hokudai.ac.jp}
\end{tabular}
\end{tabular}
\end{flushright}}

\end{document}